\newcommand{\Hom}{\operatorname{Hom}\nolimits}
\renewcommand{\Im}{\operatorname{Im}\nolimits}
\newcommand{\Ker}{\operatorname{Ker}\nolimits}
\newcommand{\pd}{\operatorname{pd}\nolimits}
\newcommand{\Ktac}{\operatorname{{\bf K}_{tac}}\nolimits}
\newcommand{\Db}{\operatorname{{\bf D}^{b}}\nolimits}
\newcommand{\Dbsg}{\operatorname{{\bf D}^{b}_{sg}}\nolimits}
\newcommand{\Id}{\operatorname{Id}\nolimits}
\renewcommand{\H}{\operatorname{H}\nolimits}
\newcommand{\rank}{\operatorname{rank}\nolimits}
\newcommand{\xra}{\xrightarrow}
\newcommand{\cone}{\operatorname{cone}\nolimits}
\newcommand{\trcone}{\operatorname{trcone}\nolimits}
\newtheorem{theorem}{Theorem}[section]
\newtheorem{corollary}[theorem]{Corollary}
\newtheorem{lemma}[theorem]{Lemma}
\newtheorem{proposition}[theorem]{Proposition}
\theoremstyle{definition}
\newtheorem*{definition}{Definition}
\newtheorem{chunk}[theorem]{}
\theoremstyle{definition}
\newtheorem{question}[theorem]{Question}
\theoremstyle{definition}
\theoremstyle{definition}
\newtheorem{example}[theorem]{Example}
\theoremstyle{definition}
\theoremstyle{definition}
\theoremstyle{remark}
\theoremstyle{definition}
\theoremstyle{definition}
\begin{document}

\title{Totally acyclic approximations}
\author{Petter A. Bergh, David A.\ Jorgensen, and W. Frank Moore}

\address{Petter Andreas Bergh \\ Institutt for matematiske fag \\
  NTNU \\ N-7491 Trondheim \\ Norway}
\email{bergh@math.ntnu.no}

\address{David A.\ Jorgensen \\ Department of mathematics \\ University
of Texas at Arlington \\ Arlington \\ TX 76019 \\ USA}
\email{djorgens@uta.edu}

\address{W. Frank Moore \\ Department of mathematics \\ University
of Texas at Arlington \\ Arlington \\ TX 76019 \\ USA}
\email{djorgens@uta.edu}

\date{\today}

\subjclass[2010]{13D02, 13H10, 13C14}

\keywords{Totally acyclic complex, Approximation, Adjoint functors, Support variety }

\begin{abstract}
Let $R$ be a commutative local ring. We study the subcategory of the homotopy category of $R$-complexes consisting of the totally acyclic  $R$-complexes.  In particular, in the context where $Q\to R$ is a surjective local ring homomorphism such that $R$ has finite projective dimension over $Q$, we define an adjoint pair of functors between the homotopy category of totally acyclic $R$-complexes and that of $Q$-complexes, which are analogous to the classical adjoint pair between the module categories of $R$ and $Q$.  We give detailed proofs of the adjunction in terms of the unit and counit.  As a consequence, one obtains a precise notion of approximations of totally acyclic $R$-complexes by totally acyclic $Q$-complexes.
\end{abstract}

\maketitle

\section*{Introduction}\label{intro}

This paper was motivated by the desire to approximate in a meaningful way totally acyclic complexes over a commutative local ring $R$ by simpler totally acyclic complexes, possibly even periodic ones.   The subcategory $\Ktac(R)$ of the homotopy category of $R$-complexes consisting of totally acyclic $R$-complexes is a thick subcategory, and therefore is also a triangulated category.  To facilitate an approximation, we assume that there exists a surjective local ring homomorphism $\varphi:Q \to R$ such that $Q$ is Gorenstein and $R$ has finite projective dimension as a $Q$-module.  Our approximation is then achieved through an adjoint pair of triangle functors
\[
\xymatrixrowsep{2pc}
\xymatrixcolsep{3pc}
\xymatrix{
\Ktac(Q) \ar@{->}[r]<0.5 ex>^{S_\varphi} & \Ktac(R) \ar@{->}[l]<0.5 ex>^{T_\varphi}
}
\]
which are consonant with the classical adjoint pair of functors between the module categories of 
$Q$ and $R$. Indeed, as is the case in the classical setting, $S_\varphi$ is simply the base change functor.  However, as nontrivial totally acyclic $R$-complexes are never totally acyclic $Q$-complexes when $R\ne Q$, obtaining the right adjoint $T_\varphi$ requires a modification of the forgetful functor.  Its construction is given in Section 2, and we prove in detail that it is a triangle functor.  We remark also that the condition of $\pd_QR<\infty$ is key to the existence of $T_\varphi$. In Section 3 we prove that $S_\varphi$ and $T_\varphi$ form an adjoint pair via the unit and counit natural transformations. In Section 4 we recall the precise notion of approximation, in terms of the counit of the adjunction. The approximations are especially meaningful in Remark \ref{period2}, where totally acyclic complexes over a quotient of a hypersurface ring are approximated by period 2 complexes.  Our interest in this project was also motivated by recent work in \cite{Krause}, \cite{Neeman} on approximations and adjoints in related categories.

To put the functors $S=S_\varphi$ and $T=T_\varphi$ in further perspective, Let $\Db(R)$ denote the bounded derived category
of $R$ and $\Db(Q)$ that of $Q$.  The classical adjoint pair of the base change functor and the forgetful functor for the module categories extend to an adjoint pair of the bounded derived categories.  Since the projective dimension of $R$ over $Q$ is finite, a perfect complex in $\Db(R)$ under the forgetful functor is a perfect complex in $\Db(Q)$.  Thus the adjoint pair extends to the verdier quotients of the bounded derived category modulo the corresponding thick subcategories of perfect complexes, in other words the singularity categories $\Dbsg(Q)$ and $\Dbsg(R)$.  This explains the right pair of vertical arrows in the diagram below.  The top horizontal arrow is the equivalence of categories proved in \cite{Buchweitz}; the functor is defined by hard truncation of a totally acyclic complex to the right of zero.  It is proved in  \cite{BerghJorgensenOppermann} that the same functor is fully faithfully in general, which explains the bottom arrow. 
\[
\xymatrixrowsep{2pc}
\xymatrixcolsep{3pc}
\xymatrix{
\Ktac(Q) \ar@{<-}[d]<0.5 ex>^T \ar@{->}[r]^{\cong} & \Dbsg(Q) \ar@{<-}[d]<0.5 ex>^\tau \\
\Ktac(R) \ar@{<-}[u]<0.5 ex>^S \ar@{^{(}->}[r] & \Dbsg(R) \ar@{<-}[u]<0.5 ex>^\sigma
}
\]
The adjoint pair $S$ and $T$ of this paper is the corresponding adjoint pair making the square commute.

\section{Preliminaries}

Let $R$ be a ring.  By an \emph{$R$-complex} $C$ we mean a sequence of left $R$-module 
homomrphisms
\[
C:  \cdots \to C_{n+1} \xrightarrow{\partial^C_{n+1}} C_n \xrightarrow{\partial^C_n} C_{n-1} \to \cdots
\]
graded homologically, where each $C_n$ is a left $R$-module, and $n$ is its \emph{homological degree}.  Without prior stipulation, an arbitrary $R$-module is assumed also to be a complex concentrated in homological degree 0.  

Recall that a \emph{semi-free} $R$-complex $F$ is one whose underlying graded
$R$-module $F^\natural$ has graded basis $E$ which can be written as a disjoint union 
$E=\bigsqcup_{n\ge 0}E^n$ such that $\partial^F(E^n)\subseteq R\langle \bigcup_{i=1}^{n-1}E^i\rangle$ 
for every $n\ge 1$.

\begin{definition} A \emph{semi-free resolution} of an $R$-complex $C$ is a quasi-isomorphism $F \to C$ of complexes where $F$ is a semi-free $R$-complex.
\end{definition}

We now state some important facts, and simply give a reference when the proofs are available in the literature.

\begin{chunk}\label{semifreeresolution} \cite[5.1.7, 5.1.13]{CFH} Every $R$-complex  $C$ has a semi-free resolution $\pi : F\to C$ with
$F_n = 0$ for all $n < \inf\{i \mid  \H_i(C)\ne 0\}$. Moreover, $\pi$ can be chosen to be surjective.  If $R$ is left Noetherian and $C$ is a bounded below complex of finitely generated $R$-modules, then $\pi$ can even be chosen to be surjective with $F_n$ finitely generated for all $n$, and
$F_n=0$ for all $n<\inf\{i\mid C_i\ne 0\}$.
\end{chunk}

The next result is the so-called `Comparison Theorem.'

\begin{chunk}\label{comparison}\cite[5.2.9]{CFH}  Let  $\pi':F'\to C'$ be a quasiisomorphism of $R$-complexes, and $\alpha:F\to C'$ a morphism of $R$-complexes with $F$ semi-free.  Then there exists a morphism of complexes $\mu:F \to F'$ such that $\pi'\mu\sim \alpha$.  If $\pi'$ is surjective, then
$\mu$ can be chosen such that $\pi'\mu=\alpha$. Moreover, $\mu$ is homotopic to any other morphism of $R$-complexes $\mu'$ with
$\pi'\mu'=\alpha$.
\end{chunk}

From this point on we assume that $R$ is a commutative Noetherian ring. 

\begin{definition} Recall from \cite{AvramovMartsinkovsky} that an $R$-complex $C$ of finitely generated free modules is called \emph{totally acyclic} if
\[
\H(C)=0=\H(\Hom_R(C,R)).
\]
We denote by $\Ktac(R)$ the homotopy category of totally acyclic $R$-complexes; the objects in
$\Ktac(R)$ are the totally acyclic $R$-complexes, and the morphisms are homotopy equivalence classes of morphisms of $R$-complexes.  For a morphism $f:C\to C'$ of $R$-complexes we write 
$[f]$ for its homotopy equivalence class.  Thus for two morphisms $f,g:C\to C'$ of $R$-complexes, one has $f\sim g$ if and only if $[f]=[g]$.
\end{definition}

The following facts we use often in the rest of the paper. Given an $R$-complex $C$, from now on we write $C^*$ for the $R$-complex $\Hom_R(C,R)$.

\begin{chunk}\label{extendhomotopy} Suppose that $s$ is an integer, $C, C'\in\Ktac(R)$ and $f:C\to C'$ is a morphism of complexes.  If there exist maps
$\sigma_n:C_n\to C'_{n+1}$ satisfying $f_n=\sigma_{n-1}\partial_n^C+\partial_{n+1}^{C'}\sigma_n$ for all $n > s$, then the $\sigma_n$ can be extended to a homotopy showing that $f\sim 0$.
\end{chunk}

\begin{proof} By induction it suffices to show there exists a map $\sigma_s:C_s\to C'_{s+1}$
such that $f_{s+1}=\sigma_s\partial_{s+1}^C+\partial_{s+2}^{C'}\sigma_{s+1}$.

Note that since $C$ is a totally acyclic complex and $C'_{s+1}$ is free, the complex 
$\Hom_R(C,C'_{s+1})$ is exact.  We have 
\begin{align*}
\partial_{s+1}^{\Hom_R(C,C'_{s+1})}(f_{s+1}-\partial_{s+2}^{C'}\sigma_{s+1})&=
\left(f_{s+1}-\partial_{s+2}^{C'}\sigma_{s+1}\right)\partial^C_{s+2}\\
&=f_{s+1}\partial^C_{s+2}-\partial^{C'}_{s+2}\left(f_{s+2}-\partial^{C'}_{s+3}\sigma_{s+2}\right)\\
&=\left(f_{s+1}\partial^C_{s+2}-\partial^{C'}_{s+2}f_{s+2}\right)-\partial_{s+3}^{C'}\partial^{C'}_{s+3}\sigma_{s+2}\\
&=0
\end{align*}
Thus $f_{s+1}-\partial_{s+2}^{C'}\sigma_{s+1}\in\Ker\partial_{s+1}^{\Hom_R(C,C'_{s+1})}
=\Im\partial_{s}^{\Hom_R(C,C'_{s+1})}$.  Therefore there exists a map
$\sigma_s:C_s\to C'_{s+1}$ such that 
$\partial_{s}^{\Hom_R(C,C'_{s+1})}(\sigma_s)=f_{s+1}-\partial_{s+2}^{C'}\sigma_{s+1}$, in other words $f_{s+1}=\sigma_s\partial_{s+1}^C+\partial_{s+2}^{C'}\sigma_{s+1}$.
\end{proof}

\begin{chunk}\label{extendmorphism} Suppose that $s$ is an integer and $C, C'\in\Ktac(R)$.  If there exist maps
$f_n:C_n\to C'_n$ satisfying $f_{n-1}\partial_n^C=\partial_n^{C'}f_n$ for all $n>s$, then the $f_n$ can be extended to a morphism of complexes $f:C\to C'$.  Any two such extensions are homotopic, that is, if $g:C \to C'$ is a morphism of complexes such that there exist maps $\sigma_n:C_n\to C'_{n+1}$
with $f_n-g_n=\sigma_{n-1}\partial_n^C+\partial_n^{C'}\sigma_n$ for all $n\ge s$, then $f\sim g$.
\end{chunk}

\begin{proof} The second statement follows immediately from \ref{extendhomotopy}.  The proof of the first statement is essentially that of \ref{extendhomotopy}, and is left to the reader.
\end{proof}

\begin{definition} A \emph{complete resolution} (see, for example, \cite{AvramovMartsinkovsky}) of an $R$-complex $C$ is a diagram of morphisms of $R$-complexes
\[
U\xrightarrow{\rho} F \xrightarrow{\pi} C
\]
such that $U\in\Ktac(R)$, $F$ is a semi-free resolution of $C$, and $\rho_n$ is bijective for all $n \gg 0$.  We will often abuse terminology and call $U$ a complete resolution of $C$.
\end{definition}

The following is a slightly generalized version of \cite[5.3]{AvramovMartsinkovsky}.  The proof
is essentially the same as that in ibid.

\begin{chunk}\label{well-defined} Complete resolutions are well-defined up to homotopy equivalence.  Specifically, if
$U\xrightarrow{\rho} F \xrightarrow{\pi} C$ and $U'\xrightarrow{\rho'} F' \xrightarrow{\pi'} C'$ are
complete resolutions of $R$-complexes $C$ and $C'$, and $\mu:C\to C'$ is a morphism of complexes, then by the comparison theorem \ref{comparison} there exists a unique-up-to-homotopy morphism $\overline\mu$ making the right-hand square of the diagram
\[
\xymatrixrowsep{2pc}
\xymatrixcolsep{3pc}
\xymatrix{
U\ar@{->}[r]\ar@{.>}[d]^{\widehat\mu} & F \ar@{.>}[d]^{\overline\mu} \ar@{->}[r] & C \ar@{->}[d]^{\mu} \\
U'\ar@{->}[r] & F' \ar@{->}[r] & C'
}
\]
commute up to homotopy, and for each choice of $\overline\mu$ there exists a 
unique-up-to-homotopy  morphism  $\widehat\mu$ making the left-hand square commute up to homotopy.  If $\mu=\Id_C$, then $\overline\mu$ and $\widehat\mu$ are homotopy equivalences.
\end{chunk}

\begin{chunk}\label{dual isomorphism}  Suppose that $Q$ and $R$ are commutative Noetherian rings, and $\varphi:Q\to R$ is a surjective ring homomorphism. Let $C$ be a complex of finitely generated free $Q$-modules.  Then 
\[
\Hom_Q(C,Q)\otimes_QR \text{ and } \Hom_R(C\otimes_QR,R) 
\]
are isomorphic as $R$-complexes. 
\end{chunk}

\begin{proof} By Hom-tensor adjunction for complexes, and after making the canonical identification of $\Hom_R(R,R)$ with $R$, we immediately get that $\Hom_R(C\otimes_Q R,R)$ and 
$\Hom_Q(C,R)$ are isomorphic.  Therefore it suffices to prove that $\Hom_Q(C,Q)\otimes_QR$ and
$\Hom_Q(C,R)$ are isomorphic.

Define maps $\alpha_n:\Hom_Q(C_{-n},Q)\otimes_QR\to\Hom_Q(C_{-n},R)$ by 
$\alpha_n(f\otimes r)(x)=\varphi(f(x))r$, and 
$\beta_n:\Hom_Q(C_{-n},R)\to\Hom_Q(C_{-n},Q)\otimes_QR$ by
$\beta_n(g)=g'\otimes_Q 1_R$ where $\varphi g'=g$.  Thus
$\alpha_n(\beta_n(g))(x)=\alpha_n(g'\otimes 1_R)(x)=\varphi(g'(x))=g(x)$, and so
$\alpha_n\beta_n=\Id_{\Hom_Q(C_{-n},R)}$.  Also, 
$\beta_n(\alpha_n(f\otimes r))=g'\otimes 1_R$ where $\varphi(g'(x))=\varphi(f(x))r$,
and so $\beta_n\alpha_n=\Id_{\Hom_Q(C_{-n},Q)\otimes_QR}$.  Finally one just
needs to check that $\alpha_n$ and $\beta_n$ commute with the differentials, and this is left to the reader.
\end{proof}


\section{The `forgetful' triangle functor}\label{forgetful}

Let $Q$ be a commutative local Gorenstein ring, and $\varphi:Q\to R$ a surjective local ring homomorphism such that $\pd_QR<\infty$.

The main objective of this section is to define the `forgetful' functor
\[
T=T_\varphi:\Ktac(R) \to  \Ktac(Q)
\]
and prove that it is a triangle functor.  The definition of $T$ is as follows.

\begin{definition}
Let $C\in\Ktac(R)$.  Then $TC\in\Ktac(Q)$ is a complete resolution of $\Im\partial_0^C$ over $Q$ (which is uniquely defined by \ref{well-defined}.)  Given a morphism $[f]:C \to C'$ in $\Ktac(R)$, we have the map $\mu: \Im\partial_0^C \to\Im \partial_0^{C'}$ induced by the morphism $f:C\to C'$ of $R$-complexes.  Then $T[f]:TC\to TC'$ is the homotopy equivalence class $[\widehat\mu]$ of the comparison map
$\widehat\mu:TC\to TC'$ between complete resolutions (which is uniquely determined by $\mu$, by \ref{well-defined}.)
\end{definition}

\begin{proposition}  $T:\Ktac(R)\to\Ktac(Q)$ is an additive functor.
\end{proposition}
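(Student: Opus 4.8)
The plan is to treat well-definedness on objects (easy), well-definedness on morphisms (the crux), and functoriality and additivity (formal) in turn. On objects there is almost nothing to do: for $C\in\Ktac(R)$ the module $\Im\partial_0^C$ is a submodule of the finitely generated free module $C_{-1}$, hence finitely generated over $R$ and, since $\varphi$ is surjective, over $Q$. As $Q$ is Gorenstein we have $\Gdim_Q(\Im\partial_0^C)<\infty$, so $\Im\partial_0^C$ admits a complete resolution $TC$, and any two such are homotopy equivalent by \ref{well-defined}; fix one choice of $TC$ for each $C$.

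For the action on morphisms, fix a morphism of $R$-complexes $f\colon C\to C'$ (not yet a homotopy class) and let $\mu_f\colon\Im\partial_0^C\to\Im\partial_0^{C'}$ denote the restriction of $f_{-1}$; this is legitimate because $C$ and $C'$ are exact, so $\Im\partial_0^C=\Ker\partial_{-1}^C$ and chain maps carry cycles to cycles, and $\mu_f$ agrees with the map $\mu$ in the definition of $T$. The assignment $f\mapsto\mu_f$ is additive, satisfies $\mu_{\Id_C}=\Id_{\Im\partial_0^C}$, and satisfies $\mu_{gf}=\mu_g\mu_f$. On the other hand, by the uniqueness clauses of \ref{well-defined}, the comparison map associated to a composite (resp.\ sum) of $Q$-module homomorphisms is homotopic to the composite (resp.\ sum) of the associated comparison maps, and the comparison map associated to an identity is homotopic to the identity of the relevant complete resolution. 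Granting the single key point below, these observations combine to show that $[\widehat{\mu_f}]$ depends only on $[f]$, that $T[\Id_C]=\Id_{TC}$, that $T([g][f])=T[g]\,T[f]$, and that $T([f]+[g])=T[f]+T[g]$; hence $T$ is an additive functor.

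The key point is that $f\sim 0$ forces $T[f]=0$. If $f_n=\sigma_{n-1}\partial_n^C+\partial_{n+1}^{C'}\sigma_n$ for maps $\sigma_n\colon C_n\to C'_{n+1}$, a short computation using that $f$ is a chain map gives $\mu_f(x)=\partial_0^{C'}\sigma_{-1}(x)$ for $x\in\Im\partial_0^C$, so that $\mu_f$ factors as
\[
\Im\partial_0^C\ \xrightarrow{\ \sigma_{-1}\ }\ C'_0\ \xrightarrow{\ \partial_0^{C'}\ }\ \Im\partial_0^{C'}
\]
through the finitely generated free $R$-module $C'_0$. Viewed over $Q$, this module has $\pd_Q C'_0=\pd_Q R<\infty$ --- the one place the hypothesis $\pd_QR<\infty$ is used --- so it suffices to show that a homomorphism of $Q$-modules factoring through a $Q$-module of finite projective dimension induces the zero comparison map. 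But a $Q$-module $P$ with $\pd_QP<\infty$ has a bounded free resolution by \ref{semifreeresolution}, hence the zero complex is a complete resolution of $P$; applying \ref{well-defined} to each of the two factors of $\mu_f$ and composing, $\widehat{\mu_f}$ factors through the zero complex, so $\widehat{\mu_f}\sim 0$. For arbitrary homotopic $f\sim g$ we then get $\mu_f-\mu_g=\mu_{f-g}$ with $f-g\sim 0$, so $[\widehat{\mu_f}]=[\widehat{\mu_g}]$, and $T$ is well defined on morphisms.

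The main obstacle is precisely this last point --- that null-homotopic morphisms of totally acyclic $R$-complexes go to $0$ in $\Ktac(Q)$; everything else is bookkeeping with \ref{well-defined}. One subsidiary fact used above also deserves a word: the zero complex genuinely is a complete resolution of any $Q$-module of finite projective dimension, which follows from \ref{semifreeresolution} together with the observation that a bounded-above totally acyclic complex is contractible (its $Q$-dual is a bounded-below exact complex of finitely generated free modules, hence split exact, and dualizing back returns the original), or one may instead cite \cite{AvramovMartsinkovsky}.
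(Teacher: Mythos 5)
Your argument is correct and follows essentially the same path as the paper's: both prove well-definedness on morphisms by observing that a null-homotopy $\sigma$ for $f$ factors $\mu$ through the $R$-free module $C_0'$, which has finite projective dimension over $Q$ since $\pd_QR<\infty$, and that this forces the comparison map $\widehat\mu$ to be null-homotopic. The only stylistic difference is in phrasing the final step: you note that the zero complex is a complete $Q$-resolution of any module of finite projective dimension and compose comparison maps via the uniqueness in \ref{well-defined}, while the paper explicitly lifts $\mu$ through a bounded minimal free resolution $K$ of $C_0'$ and observes the resulting lift is eventually zero.
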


\begin{proof}  The only verifications of the axioms of additive functor that are possibly not obvious follow easily from the fact that $T$ is well-defined on morphisms in $\Ktac(R)$, which we now prove.  Let $[f]:C \to C'$ be the zero morphism in $\Ktac(R)$. Let $\mu:\Im\partial_0^C\to\Im\partial_0^{C'}$ be the map induced by $f$. Then there exists a homotopy
$\sigma\in\Hom_R(C,C')_1$
such that  $\overline{\partial_0^{C'}}(\sigma_{-1}|_{\Im\partial_0^C})=\mu$,  where $\overline{\partial_0^{C'}}:C_0'\to \Im\partial_0^{C'}$ is the map induced by $\partial_0^{C'}$. Let $F$ and $F'$ be $Q$-free resolutions of $\Im\partial_0^C$ and $\Im\partial_0^{C'}$, respectively, and $K$ a minimal $Q$-free resolution of $C_0'$.  Then
any chain map $\overline\mu:F\to F'$ lifting $\mu$ is homotopic to a lifting of the composition
\[
\xymatrixrowsep{2pc}
\xymatrixcolsep{3pc}
\xymatrix{
\cdots \ar@{->}[r] & F_1\ar@{->}[r]^{\partial_1^F}\ar@{.>}[d] & F_0 \ar@{.>}[d] \ar@{->}[r] & \Im\partial_0^C \ar@{->}[d]^{\sigma_{-1}|_{\Im\partial_0^C}} \ar@{->}[r] & 0\\
\cdots \ar@{->}[r] & K_1\ar@{->}[r]^{\partial_1^K}\ar@{.>}[d] & K_0 \ar@{.>}[d]\ar@{->}[r] & C_0' \ar@{->}[d]^{\overline{\partial_0^{C'}}}\ar@{->}[r] & 0\\
\cdots \ar@{->}[r] & F'_1 \ar@{->}[r]^{\partial_1^{F'}} & F'_0 \ar@{->}[r] & \Im\partial_0^{C'} \ar@{->}[r] & 0
}
\]
which is eventually zero since $K$ is a finite complex.  This shows that $\widehat\mu\sim 0$, in other words, $T[f]=0$.
\end{proof}

\begin{theorem} $T:\Ktac(R)\to\Ktac(Q)$ is a triangle functor.
\end{theorem}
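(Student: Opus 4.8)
The plan is to show that $T$ commutes with the shift functor $\Sigma$ and sends distinguished triangles to distinguished triangles. Since $\Ktac(R)$ is the homotopy category of totally acyclic complexes, its triangulated structure comes from mapping cones: a triangle is distinguished exactly when it is isomorphic (in $\Ktac(R)$) to one of the form $C \xrightarrow{f} C' \to \cone(f) \to \Sigma C$ for an honest morphism $f$ of complexes. So after dealing with the shift, the core task is: given $f\colon C\to C'$, produce a natural isomorphism in $\Ktac(Q)$ between $T(\cone(f))$ and $\cone(Tf)$, compatibly with the structure maps of the two triangles.

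First I would handle the shift. If $C\in\Ktac(R)$, then $\Im\partial_0^{\Sigma C} = \Im\partial_1^{C} \cong \Sigma^{-1}(\text{something})$; more precisely one checks that $\Im\partial_0^{\Sigma C}$ is the first syzygy of $\Im\partial_0^{C}$, so that a complete resolution of it is $\Sigma(TC)$ up to homotopy equivalence. Thus $T\Sigma C \cong \Sigma TC$ naturally, and one verifies the naturality square against $T\Sigma[f] = \Sigma T[f]$ using the uniqueness-up-to-homotopy clause of \ref{well-defined}. This step is essentially bookkeeping once the syzygy identification is set up.

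For the triangles, fix a morphism $f\colon C\to C'$ of totally acyclic $R$-complexes, inducing $\mu\colon \Im\partial_0^C \to \Im\partial_0^{C'}$. The short exact sequence of complexes $0\to C'\to \cone(f)\to \Sigma C\to 0$ restricts, after truncating to the right of zero and passing to images, to a short exact sequence relating $\Im\partial_0^{C}$, $\Im\partial_0^{C'}$, and $\Im\partial_0^{\cone(f)}$ (the last being, up to a syzygy shift, the cone of $\mu$ in the derived sense). I would build a $Q$-free resolution of this situation by the horseshoe lemma: take the chosen complete resolutions $TC$ and $TC'$, and a chosen lift $\widehat\mu$ of $\mu$; then $\cone(\widehat\mu)$ is a $Q$-complex that is eventually a complex of finitely generated frees, is eventually bijective onto a semi-free resolution of $\Im\partial_0^{\cone(f)}$, and — crucially — is totally acyclic over $Q$. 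Totality of acyclicity is where I expect the real work: exactness of $\cone(\widehat\mu)$ is automatic since $TC, TC'$ are exact, but exactness of $\Hom_Q(\cone(\widehat\mu),Q)$ requires $\Hom_Q(TC,Q)$ and $\Hom_Q(TC',Q)$ to be exact, which holds because complete resolutions over the Gorenstein ring $Q$ are totally acyclic — and here is exactly where $\pd_Q R<\infty$ and the Gorenstein hypothesis enter, via \ref{dual isomorphism} and the standard fact that $\Im\partial_0^C$, being an $R$-module of finite projective dimension's worth of distance from a Gorenstein ring, has a complete resolution. Hence $\cone(\widehat\mu)\in\Ktac(Q)$, and by \ref{well-defined} it is homotopy equivalent to the chosen $T(\cone(f))$, via a homotopy equivalence compatible with the inclusion of $TC'$ and projection onto $\Sigma TC$.

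Finally I would assemble these pieces into the required isomorphism of triangles $(TC\to TC'\to T\cone(f)\to \Sigma TC) \cong (TC\to TC'\to \cone(Tf)\to \Sigma TC)$ in $\Ktac(Q)$, checking that all three comparison maps are compatible; this again reduces to uniqueness-up-to-homotopy from \ref{well-defined}, since every map in sight is a comparison map between complete resolutions induced by a fixed map on images. The main obstacle, as noted, is verifying that $\cone(\widehat\mu)$ is genuinely totally acyclic over $Q$ — i.e., that dualizing preserves exactness — and keeping track of the syzygy shifts so that the structure maps of the two triangles match on the nose up to homotopy.
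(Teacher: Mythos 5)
Your proposal is correct and takes essentially the same approach as the paper: commutativity with shift is handled via the syzygy identification (the paper's Corollary~\ref{trcone}), and the triangle condition by lifting $\mu$ to a comparison map $\widehat\mu$ between complete resolutions and identifying $\cone(\widehat\mu)$ as a complete resolution of $\Im\partial_0^{\cone(f)}$. The one place you over-worry is total acyclicity of $\cone(\widehat\mu)$: since $\Ktac(Q)$ is a thick subcategory of the homotopy category it is closed under mapping cones, so that step is automatic and requires no appeal to \ref{dual isomorphism} or to $\pd_Q R<\infty$ (which instead enters in the shift argument via Corollary~\ref{trcone}).
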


\begin{proof} We first show that $T$ commutes with shifts, that is, there is a natural isomorphism between $T\Sigma$ and
$\Sigma T$. Let $M=\Im\partial_0^C$.
From Corollary \ref{trcone} below we see that a minimal free resolution of $\Omega^R_1(M)$ agrees with that of $M$ beginning at
degree $\pd_QR+1$.  Specifically if $F$ and $F'$ are minimal free resolutions of $M$ and $\Omega^R_1(M)$ over $Q$, respectively, then we have $\Sigma^{-1}(F_{\ge c+2}) \cong F'_{\ge c+1}$, for $c=\pd_QR$.  This implies that $\Sigma^{-1}(TC)\simeq T(\Sigma^{-1}C)$, which is another way of stating the result.  It is clear that this isomorphism respects morphisms in $\Ktac(R)$, that is, if $[f]:C \to C'$ is a morphism in $\Ktac(R)$, then the following diagram commutes.
\[
\xymatrixrowsep{2pc}
\xymatrixcolsep{3pc}
\xymatrix{
T\Sigma C \ar@{->}[r]^{T\Sigma[f]}\ar@{->}[d]^\simeq & T\Sigma C' \ar@{->}[d]^\simeq \\
\Sigma TC \ar@{->}[r]^{\Sigma T[f]} & \Sigma TC' 
}
\]

Next we show that $T$ takes distinguished triangles to distinguished triangles.  Any distinguished triangle in $\Ktac(R)$ is isomorphic as a triangle to one of the form $C\xrightarrow{[f]} C'\to \cone([f]) \to\Sigma C$.  We need to complete the diagram
\[
\xymatrixrowsep{2pc}
\xymatrixcolsep{3pc}
\xymatrix{
TC \ar@{->}[r]^{T[f]}\ar@{=}[d] & TC' \ar@{->}[r]\ar@{=}[d] & T\cone([f]) \ar@{->}[r]\ar@{.>}[d] & \Sigma(TC)\ar@{=}[d]\\
TC \ar@{->}[r]^{T[f]} & TC' \ar@{->}[r] & \cone(T[f]) \ar@{->}[r] & \Sigma(TC)
}
\]
so that the second two squares commute.

Let  $TC \xrightarrow{\rho} F \xrightarrow{\pi} C_{\ge -1}$ and $TC' \xrightarrow{\rho'} F' \xrightarrow{\pi'} C'_{\ge 0}$ be complete resolutions over $Q$, and
$\mu:C_{\ge -1} \to C'_{\ge 0}$ be the map induced by $f$.   Then by \ref{well-defined} there exists a morphism of $Q$-complexes $\overline\mu:F\to F'$ such that all squares in the following diagram commute (we can assume that $\pi'$ is surjective.)
\[
\xymatrixrowsep{1pc}
\xymatrixcolsep{2pc}
\xymatrix{
& F_2 \ar@{->}[rr]\ar@{->}[dd]^<<<<{\pi_2}\ar@{.>}[dl]^{\overline\mu_2} && F_1 \ar@{->}[rr]\ar@{->}[dd]^<<<<{\pi_1}\ar@{.>}[dl]^{\overline\mu_1}&& F_0 \ar@{->}[rr]\ar@{->}[dd]^<<<<{\pi_0}\ar@{.>}[dl]^{\overline\mu_0} && F_{-1}\ar@{->}[dd]^<<<<{\pi_{-1}}\\
F_2' \ar@{->}[rr]\ar@{->}[dd]^<<<<{\pi'_2} && F_1' \ar@{->}[rr]\ar@{->}[dd]^<<<<{\pi'_1} && F_0'\ar@{->}[dd]^<<<<{\pi'_0}  &&&\\
& C_2 \ar@{->}[rr]\ar@{->}[dl]^{\mu_2} && C_1 \ar@{->}[rr]\ar@{->}[dl]^{\mu_1}&& C_0 \ar@{->}[rr]\ar@{->}[dl]^{\mu_0} && C_{-1}\\
C_2' \ar@{->}[rr] && C_1' \ar@{->}[rr] && C_0' &&&\\
}
\]
This diagram gives rise to a commutative diagram of short exact sequences of morphisms of $Q$-complexes
\[
\xymatrixrowsep{2pc}
\xymatrixcolsep{3pc}
\xymatrix{
0 \ar@{->}[r]  & F' \ar@{->}[r] \ar@{->}[d]^{\pi'} & \cone(\overline\mu) \ar@{->}[r] \ar@{->}[d]^{\left(\begin{smallmatrix}\pi' & 0 \\ 0 & \Sigma\pi \end{smallmatrix}\right)} & \Sigma F \ar@{->}[r] \ar@{->}[d]^{\Sigma\pi} & 0\\
0 \ar@{->}[r]  & C' _{\ge 0} \ar@{->}[r]  & \cone(\mu) \ar@{->}[r] & \Sigma(C_{\ge -1}) \ar@{->}[r] & 0\\
}
\]
The resulting commutative diagram of long exact sequences of homology shows that the morphism of $Q$-complexes $\cone(\overline\mu) \to \cone(\mu)$ is a quasiisomorphism.  Thus $\cone(\overline\mu)$ is a semi-free resolution of $\cone(\mu)$, and therefore also of $\Im\partial_0^{\cone(f)}$.  It follows that
\[
\cone(\widehat\mu) \to \cone(\overline\mu) \to \Im\partial_0^{\cone(f)}
\]
is a complete resolution of $\Im\partial_0^{\cone(f)}$, where $\widehat\mu:TC \to TC'$ is a morphism of $Q$-complexes as in \ref{well-defined}.  We now have that the diagram of morphisms
\[
\xymatrixrowsep{2pc}
\xymatrixcolsep{3pc}
\xymatrix{
TC \ar@{->}[r]^{\widehat\mu}\ar@{=}[d] & TC' \ar@{->}[r]\ar@{=}[d] & T\cone(f) \ar@{->}[r]\ar@{=}[d] & \Sigma(TC)\ar@{=}[d]\\
TC \ar@{->}[r]^{\widehat\mu} & TC' \ar@{->}[r] & \cone(\widehat\mu) \ar@{->}[r] & \Sigma(TC)
}
\]
which commutes up to homotopy.  The result follows.
\end{proof}

For an arbitrary commutative local ring $A$, consider a short exact sequence of finitely generated $A$-modules $0\to X \to Y \xrightarrow{\pi} Z \to 0$.
Let $F$ and $G$ be minimal free resolutions of $Y$ and $Z$, respectively. Let $\cone(\phi)$ denote the mapping
cone of the morphism $\phi: F \to G$ lifting the surjection $\pi$:
\[
\cone(\phi): \cdots \to F_1 \oplus G_2 \xrightarrow{\left(\begin{smallmatrix} -\partial_1^F & 0 \\ \phi_1 & \partial_2^G \end{smallmatrix}\right)} F_0 \oplus G_1 \xrightarrow{\left(\begin{smallmatrix} \phi_0 & \partial_1^G \end{smallmatrix}\right)} G_0.
\]

\begin{lemma} In the notation of the previous discussion, the truncated mapping cone
\[
\trcone(\phi): \cdots \to F_2 \oplus G_3 \xrightarrow{\left(\begin{smallmatrix} -\partial_2^F & 0 \\ \phi_2 & \partial_3^G \end{smallmatrix}\right)} F_1 \oplus G_2 \xrightarrow{\left(\begin{smallmatrix} -\partial_1^F & 0 \\ \phi_1 & \partial_2^G \end{smallmatrix}\right)} \ker\left(\begin{matrix} \phi_0 & \partial_1^G \end{matrix}\right)
\]
is (after shift) a free resolution of $X$.
\end{lemma}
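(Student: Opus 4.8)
The plan is to recognize $\cone(\phi)$ as a complex of free modules whose homology is concentrated in a single degree, and then to see $\trcone(\phi)$ as (a shift of) its soft truncation. First I would write down the standard short exact sequence of complexes
\[
0 \longrightarrow G \longrightarrow \cone(\phi) \longrightarrow \Sigma F \longrightarrow 0 ,
\]
with $G$ included as the subcomplex in the ``$G$-coordinate'' and $\Sigma F$ the quotient, and pass to the long exact homology sequence. Since $F$ and $G$ are resolutions of $Y$ and $Z$, we have $\H_0(F)=Y$, $\H_0(G)=Z$, and $\H_i(F)=\H_i(G)=0$ for $i\ge 1$; the long exact sequence then forces $\H_n(\cone(\phi))=0$ for $n\ne 0,1$ and collapses to
\[
0 \longrightarrow \H_1(\cone(\phi)) \longrightarrow Y \xrightarrow{\ \delta\ } Z \longrightarrow \H_0(\cone(\phi)) \longrightarrow 0 .
\]
The one point needing a line of checking is that the connecting map $\delta$ is, up to sign, the map $\pi$ itself — this is precisely the statement that $\phi$ lifts $\pi$ — whence $\H_1(\cone(\phi))=\Ker\pi=X$ and $\H_0(\cone(\phi))=\Coker\pi=0$.

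Next I would exploit the vanishing $\H_0(\cone(\phi))=0$, which says exactly that the map $(\phi_0\ \ \partial_1^G)\colon F_0\oplus G_1\to G_0$ is surjective. Because $G_0$ is free, the short exact sequence
\[
0 \longrightarrow \Ker(\phi_0\ \ \partial_1^G) \longrightarrow F_0\oplus G_1 \longrightarrow G_0 \longrightarrow 0
\]
splits, so $\Ker(\phi_0\ \ \partial_1^G)$ is a finitely generated projective $A$-module, hence free since $A$ is local. I expect this to be the only real obstacle in the argument: every higher term of $\trcone(\phi)$, being $F_n\oplus G_{n+1}$, is visibly finitely generated free, so it is precisely the bottom term whose freeness must be argued, and the splitting above does it. (Minimality of $F$ and $G$ plays no role in this lemma.)

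Finally I would observe that $\trcone(\phi)$ is, after the evident shift placing $X$ in homological degree zero, the soft truncation of $\cone(\phi)$: its bottom module is the cycle module $\Ker(\phi_0\ \ \partial_1^G)=\Z_1(\cone(\phi))$ and in every higher degree it coincides with $\cone(\phi)$, equipped with the inherited differentials — which is exactly the complex displayed above. Soft truncation leaves homology unchanged in degrees $\ge 1$ and kills it in degree $0$, so $\trcone(\phi)$ has homology $X$ in its bottom degree and $0$ elsewhere; as all of its terms are finitely generated free, the shift $\Sigma^{-1}\trcone(\phi)$ is a free resolution of $X$, as claimed.
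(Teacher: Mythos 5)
Your proof is correct and follows the same overall architecture as the paper's: form the short exact sequence $0 \to G \to \cone(\phi) \to \Sigma F \to 0$, pass to the long exact sequence of homology, identify $\H_1(\cone(\phi)) \cong X$ and $\H_0(\cone(\phi)) = 0$, and observe that $\trcone(\phi)$ agrees with $\cone(\phi)$ in degrees $\ge 1$, so $\Sigma^{-1}\trcone(\phi)$ resolves $X$. The one place you genuinely diverge is how surjectivity of $\left(\begin{smallmatrix}\phi_0 & \partial_1^G\end{smallmatrix}\right)$ — needed for the bottom term of $\trcone(\phi)$ to be free — is established. The paper appeals to minimality of $F$ and $G$ to argue $\phi_0$ is already surjective; you instead read off $\H_0(\cone(\phi)) = \Coker\bigl(\phi_0\ \ \partial_1^G\bigr) = \Coker(\pi) = 0$ from the long exact sequence (using that the connecting map is $\H_0(\phi)=\pi$), then split against the free module $G_0$ and invoke finitely generated projective implies free over the local ring $A$. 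Your route is slightly cleaner and correctly shows — despite the paper's phrasing — that minimality of $F$ and $G$ is not actually needed for this lemma; it only becomes relevant in the subsequent Corollary \ref{trcone}, where the resolutions must be minimal for the tail-comparison of $F$ and $\Sigma^{-1}\trcone(\phi)$ to work.
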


\begin{proof}
We first note that Since $F$ and $G$ are both minimal free resolutions, the map $\phi_0$ is surjective.  Therefore
so is the map $\left(\begin{smallmatrix} \phi_0 & \partial_1^G \end{smallmatrix}\right)$, and so
$\ker\left(\begin{smallmatrix} \phi_0 & \partial_1^G \end{smallmatrix}\right)$ is free.

From the short exact sequence of complexes $0\to G \to \cone(\phi) \to \Sigma F \to 0$, we get the long exact sequence of homology
\[
\cdots \to \H_1(G) \to \H_1(\cone(\phi)) \to \H_0(F) \xrightarrow{\pi} \H_0(G) \to \H_0(\cone(\phi)) \to 0.
\]
Since $\left(\begin{smallmatrix} \phi_0 & \partial_1^G \end{smallmatrix}\right)$ is surjective, $\H_0(\cone(\phi))=0$.
Thus this long exact sequence of homology reduces to the short exact sequence
\[
0 \to \H_1(\cone(\phi)) \to Y \xrightarrow{\pi} Z \to 0,
\]
and so $X\cong \H_1(\cone(\phi))$.  Since $\H_i(\cone(\phi))=\H_i(\trcone(\phi)$ for all $i\ge 1$, the result follows, that is $\Sigma^{-1}\trcone(\phi)$ is a free resolution of $X$.
\end{proof}

\begin{corollary}\label{trcone} Let $K$ be a minimal free resolution of $R$ over $Q$, $F$ a minimal resolution of $M$ over $Q$, and $\mu=\rank F_0$. Consider the morphism of complexes $\phi: K^\mu \to F$ lifting the surjection $F_0 \to M$.  Then $\Sigma^{-1}\trcone(\phi)$ is a free resolution of $\Omega^R_1(M)$ over $Q$. If $c=\pd_QR<\infty$ then $(\Sigma^{-1}\trcone\phi)_{\ge c+1}=\Sigma^{-1}(F_{\ge c+2})$
\end{corollary}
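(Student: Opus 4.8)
The strategy is to deduce the statement directly from the preceding Lemma, applied to a carefully chosen short exact sequence. Note first that $M=\Im\partial_0^C$ is a finitely generated $R$-module, so that $\Omega^R_1(M)$ is defined, and that $\mu=\rank F_0$ — the minimal number of generators of $M$ as a $Q$-module — is also the minimal number of generators of $M$ as an $R$-module: since $\varphi$ is a surjective local homomorphism the residue fields of $Q$ and $R$ coincide and $\m M=\n M$ (writing $\m,\n$ for the maximal ideals of $R,Q$), whence $M/\m M=M/\n M$. It follows that the augmentation $F_0=Q^\mu\twoheadrightarrow M$ factors through the canonical surjection $Q^\mu\twoheadrightarrow R^\mu$, and that the induced surjection $R^\mu\twoheadrightarrow M$ has kernel contained in $\m R^\mu$; thus it is a minimal free cover of $M$ over $R$, and its kernel is a copy of $\Omega^R_1(M)$.

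Next I would regard the exact sequence
\[
0 \to \Omega^R_1(M) \to R^\mu \to M \to 0
\]
as a short exact sequence of finitely generated $Q$-modules (restriction of scalars along $\varphi$ is exact, and $R$, hence $R^\mu$, is module-finite over $Q$). Then I would apply the Lemma with $A=Q$, this sequence playing the role of $0\to X\to Y\to Z\to 0$, with $K^\mu$ as the minimal $Q$-free resolution of $Y=R^\mu$ (a finite direct sum of copies of the minimal resolution $K$ is again minimal), with $F$ as the minimal $Q$-free resolution of $Z=M$, and with $\phi\colon K^\mu\to F$ the chain map of the statement, which lifts the surjection $R^\mu\twoheadrightarrow M$ (one may take $\phi_0=\Id_{Q^\mu}$, and any choice is homotopic to this). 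The Lemma then yields that $\Sigma^{-1}\trcone(\phi)$ is a free resolution of $\Omega^R_1(M)$ over $Q$, which is the first assertion.

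For the last assertion, assume $c=\pd_Q R<\infty$, so that $K_n=0$, hence $(K^\mu)_n=0$, for all $n>c$. Reading off the terms and differentials of $\trcone(\phi)$ as displayed in the Lemma and applying the shift $\Sigma^{-1}$, one has $(\Sigma^{-1}\trcone\phi)_n=(K^\mu)_n\oplus F_{n+1}$ (for $n\ge 1$) with the evident matrix differential inherited from the Lemma; in homological degrees $n\ge c+1$ every $K^\mu$-summand, and every differential entry passing through one, vanishes, so the truncation $(\Sigma^{-1}\trcone\phi)_{\ge c+1}$ is literally the complex $\Sigma^{-1}(F_{\ge c+2})$.

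I do not expect a real obstacle here: the homological substance is already contained in the Lemma. The two points needing a little care are the verification that $R^\mu\to M$ is a \emph{minimal} free cover over $R$ — without this its kernel would be $\Omega^R_1(M)$ only up to a free summand — and the routine bookkeeping with the shift $\Sigma^{-1}$ and with the truncation index in the final comparison.
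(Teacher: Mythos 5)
Your proof is correct and takes exactly the approach the paper leaves implicit: the paper states this Corollary immediately after the Lemma without further argument, and the only reasonable reading is precisely your application of that Lemma with $A=Q$, $Y=R^\mu$, $Z=M$, using that the minimal $Q$-free resolution of $R^\mu$ is $K^\mu$. Your intermediate observations — that $\mu$ is also the minimal number of $R$-generators of $M$ (so the kernel of $R^\mu\twoheadrightarrow M$ really is $\Omega^R_1(M)$ with no extraneous free summand) and the degree-by-degree check that the $K^\mu$-summands of $\Sigma^{-1}\trcone(\phi)$ vanish in degrees $\ge c+1$ — are exactly the details one needs and are handled correctly.
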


\section{Adjunction}

Keeping the same assumptions on $Q$ and $R$ as in the previous section, the goal of this section is to compare $\Ktac(R)$ with $\Ktac(Q)$.  This will be done by constructing an adjoint pair of triangle functors between the two categories.

The descension functor is easy:
\[
S=S_\varphi:\Ktac(Q)\to \Ktac(R)
\]
is defined by $SC=C\otimes_QR$ and $S[f]=[f\otimes_QR]$ for $C$ an object and $[f]$ a morphism in
$\Ktac(Q)$.  This is a triangle functor due in part to \ref{dual isomorphism}.  Indeed, if $C$ is acyclic complex of free $Q$-modules, then $C\otimes_QR$ is an acyclic complex of free $R$-modules (since $\pd_QR<\infty$), and $\Hom_Q(C,Q)$ being acyclic implies $\Hom_Q(C,Q)\otimes_QR$ is acyclic.  Hence 
$\Hom_R(C\otimes_QR,R)$ is acyclic by \ref{dual isomorphism}.  It is easy to see that $S$ takes homotopic morphisms of complexes to homotopic morphisms of complexes, commutes with shifts and takes distinguished triangles to distinguished triangles.

The ascension functor
\[
T=T_\varphi:\Ktac(R)\to\Ktac(Q)
\]
is the functor defined in Section \ref{forgetful}.

Our main result for this section is the following.

\begin{theorem}\label{adjunction}
The triangle functors $S$ and $T$ form an adjoint pair, that is, they satisfy the following property:
for all $C\in\Ktac(R)$ and $D\in\Ktac(Q)$ there exist a bijection
\begin{align}
\Hom_{\Ktac(Q)}(D,TC) \to \Hom_{\Ktac(R)}(SD,C)
\end{align}
which is natural in each variable.
\end{theorem}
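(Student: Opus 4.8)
The plan is to establish the adjunction through a unit $\eta\colon\Id_{\Ktac(Q)}\Rightarrow TS$ and a counit $\epsilon\colon ST\Rightarrow\Id_{\Ktac(R)}$ satisfying the two triangle identities; the required bijection is then $[g]\mapsto\epsilon_C\circ S[g]$, with inverse $[h]\mapsto T[h]\circ\eta_D$, and naturality in each variable is automatic. Two facts are used throughout. First, every object $U$ of $\Ktac(Q)$ (respectively of $\Ktac(R)$) is itself a complete resolution over $Q$ (respectively over $R$) of its cycle module $\Im\partial_0^U$, via $U\to U_{\ge 0}\to\Im\partial_0^U$; so by \ref{well-defined} a homomorphism of such cycle modules induces a morphism in the appropriate homotopy category, uniquely up to homotopy, and an identity homomorphism of cycle modules induces the identity up to homotopy. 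Second, base change is compatible with cycle modules: for $D\in\Ktac(Q)$ one has $\Im\partial_0^{SD}=\Im\partial_0^D\otimes_Q R$, and likewise $\Im\partial_0^{STC}=\Im\partial_0^{TC}\otimes_Q R$ for $C\in\Ktac(R)$. The key point is the hypothesis $\pd_Q R<\infty$, which via dimension shifting down a totally acyclic complex makes $\Tor^Q_{>0}(-,R)$ vanish on every syzygy and cosyzygy module occurring, and hence identifies $\Im(\partial_0^D\otimes_Q R)$ with $\Im\partial_0^D\otimes_Q R$.

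Granting this, I would define $\eta_D\colon D\to TSD$ as the morphism of $\Ktac(Q)$ induced via \ref{well-defined} by the classical unit homomorphism $N\to N\otimes_Q R$, $x\mapsto x\otimes 1$, with $N=\Im\partial_0^D$: here $D$ is the $Q$-complete resolution of $N$, while by the definition of $T$ the complex $TSD$ is the $Q$-complete resolution of $\Im\partial_0^{SD}=N\otimes_Q R$. Dually, $TC$ is a $Q$-complete resolution of $M:=\Im\partial_0^C$, so there is a canonical $Q$-homomorphism $\Im\partial_0^{TC}\to M$; tensoring with $R$ yields an $R$-homomorphism $\Im\partial_0^{STC}=\Im\partial_0^{TC}\otimes_Q R\to M\otimes_Q R=M$, and since $STC$ is the $R$-complete resolution of $\Im\partial_0^{STC}$ and $C$ is that of $M$, this homomorphism induces the counit $\epsilon_C\colon STC\to C$ via \ref{well-defined}. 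Naturality of $\eta$ and $\epsilon$ then follows from the uniqueness clause of \ref{well-defined}, the functoriality of $U\mapsto\Im\partial_0^U$ and of $T$ on morphisms, and the compatibility of the canonical maps $\Im\partial_0^{TC}\to M$ with morphisms of $\Ktac(R)$.

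It remains to verify the triangle identities $\epsilon_{SD}\circ S\eta_D\sim\Id_{SD}$ in $\Ktac(R)$ and $T\epsilon_C\circ\eta_{TC}\sim\Id_{TC}$ in $\Ktac(Q)$. For these I would compute in high homological degrees, where all the complete resolutions involved agree with ordinary free resolutions, and reduce each identity to its effect on cycle modules. That effect is precisely a triangle identity for the classical module adjunction $-\otimes_Q R\dashv\operatorname{res}$ --- for instance the composite $N\otimes_Q R\xrightarrow{x\mapsto x\otimes 1}(N\otimes_Q R)\otimes_Q R\xrightarrow{\text{mult}}N\otimes_Q R$ is the identity of $N\otimes_Q R$ --- so the morphism it induces on complete resolutions is homotopic to the identity, by the uniqueness clauses of \ref{well-defined} and of the Comparison Theorem \ref{comparison}. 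One then propagates this high-degree null-homotopy of (composite minus identity) to all degrees using \ref{extendhomotopy}.

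I expect the genuine difficulty to lie in this last reduction: one must verify carefully how $S$ and $T$ act on the comparison morphisms supplied by \ref{well-defined} --- that $S$ takes the comparison morphism of a module homomorphism $\mu$ to one homotopic to the comparison morphism of $\mu\otimes_Q R$, and that $T\epsilon_C$ is the comparison morphism attached to the cycle-module homomorphism $\Im\partial_0^{STC}\to M$ above --- so that the triangle-identity composites really do descend to the classical identities on cycle modules. The bookkeeping is delicate because the semi-free part of a complete resolution is concentrated in nonnegative degrees whereas the complexes are two-sided infinite, so one must repeatedly pass between a complex and its truncation using \ref{extendmorphism} and \ref{extendhomotopy}. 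The one remaining, but short, obstacle is the identification $\Im\partial_0^{SD}=\Im\partial_0^D\otimes_Q R$; without it the functor $T$ would fail to ``see'' base change correctly, and it is precisely here that $\pd_Q R<\infty$ is indispensable.
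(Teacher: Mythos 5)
Your proposal and the paper's proof share the same overall strategy: both establish the adjunction by constructing a unit $\eta\colon\Id_{\Ktac(Q)}\Rightarrow TS$ and counit $\epsilon\colon ST\Rightarrow\Id_{\Ktac(R)}$ and verifying the two triangle identities, and your definitions of $\eta_D$ and $\epsilon_C$ (as the unique-up-to-homotopy lifts, via \ref{well-defined}, of the classical module-level unit $N\to N\otimes_Q R$ and of the canonical map $\Im\partial_0^{TC}\otimes_Q R\to M$) agree up to homotopy with the paper's more explicit formulas (the embedding of $D_n$ into the first summand of $TSD_n\subseteq(D\otimes_Q K)_n$, and base change of a comparison map $F\to C_{\ge 0}$). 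The principal divergence is in the verification of the triangle identities: the paper first records the explicit identification $TSD\simeq D\otimes_Q K$, then builds by hand a chain map $TC_{\ge0}\otimes_Q K\to F$ using null-homotopies $\sigma_i$ for the morphisms $x_i\widehat\mu$ (with $\ker\varphi=(x_1,\dots,x_r)$), so that $(T\epsilon_C)_n(a\otimes b)=ab$ on $TC_n\otimes_Q K_0$ and composing with $\eta_{TC}(a)=a\otimes 1$ gives the identity on the nose; the companion identity $\epsilon_{SD}\circ S\eta_D\sim\Id_{SD}$ is then read off the inclusion-followed-by-projection structure of $STSD\simeq\bigoplus_i SD_{n-i}\otimes_Q SK_i$. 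Your route, which replaces this computation by the uniqueness clauses of \ref{well-defined} and \ref{comparison}, can be made to work but you should be alert to one subtlety you only gesture at: the effect of $\eta_{TC}$ on degree-zero cycle modules is a map $\Im\partial_0^{TC}\to\Im\partial_0^{TSTC}$, and the target is \emph{not} $\Im\partial_0^{STC}=\Im\partial_0^{TC}\otimes_Q R$ (indeed $\Im\partial_0^{TSTC}$ is MCM over $Q$ while $\Im\partial_0^{STC}$ generally is not when $R\neq Q$), so the ``classical triangle identity on cycle modules'' must be obtained after composing with the canonical comparison map $\Im\partial_0^{TSTC}\to\Im\partial_0^{STC}$; the paper's concrete formulas avoid having to track these comparisons. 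Otherwise the plan is sound, and the key preparatory point you emphasize --- that $\pd_QR<\infty$ together with dimension shifting gives $\Tor_{>0}^Q(-,R)=0$ on all cycle modules, hence $\Im\partial_0^{SD}=\Im\partial_0^D\otimes_QR$ --- is exactly right and is also what makes $S$ land in $\Ktac(R)$ in the first place.
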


Before engaging the proof, we observe that for $D\in \Ktac(Q)$ one has
\[
TSD\simeq D \otimes_Q K
\]
where $K$ is a $Q$-free resolution of $R$.  Indeed, one has that $D_{\ge 0}\otimes_Q K$ is a $Q$-free resolution of $\Im\partial_0^{SD}\cong \Im\partial_0^D\otimes_QR$.  The assertion is now clear.

\begin{proof} In order to prove the theorem we define natural transformations
\[
\eta:\Id_{\Ktac(Q)}\to TS
\]
and
\[
\epsilon:ST \to \Id_{\Ktac(R)}
\]
--- the unit and counit, respectively, of the adjunction --- as follows. For  $D\in\Ktac(Q)$ define $\eta_D : D \to TSD$ to be the morphism of complexes embedding $D_n$ into the first component
of $TSD_n=\bigoplus_{i=0}^n D_{n-i}\otimes_Q K_i$ for all $n$.  And for $C\in\Ktac(R)$ define $\epsilon_C : STC \to C$ to be the morphism induced by the comparison map $F \to C_{\ge 0}$, where $F$ is a free resolution of $\Im\partial_0^C$ over $Q$.
It follows from \ref{extendmorphism} and \ref{extendhomotopy} that $\eta$ and $\epsilon$ are natural in their arguments.

We just need to show that
\[
T\epsilon_C\circ\eta_{TC} \sim \Id_{TC} \text{\quad and \quad}  \epsilon_{SD}\circ S\eta_D \sim \Id_{SD}
\]
First we discuss the map $T\epsilon_C$.  We have a morphism of complexes
\[
\xymatrixrowsep{2pc}
\xymatrixcolsep{3pc}
\xymatrix{
\cdots \ar@{->}[r] & TC_1\ar@{->}[r]^{\partial_1^{TC}}\ar@{->}[d]^{\widehat\mu_1} & TC_0 \ar@{->}[d]^{\widehat\mu_0} \ar@{->}[r] & \Im\partial_0^{TC} \ar@{->}[d]^{\nu} \ar@{->}[r] & 0\\
\cdots \ar@{->}[r] & F_1\ar@{->}[r]^{\partial_1^F} & F_0\ar@{->}[r] & \Im\partial_0^C \ar@{->}[r] & 0
}
\]
where $F$ is a $Q$-free resolution of $\Im\partial_0^C$.
Suppose that $\ker\varphi=(x_1,\dots,x_r)$.  For each $1 \le i \le r$, let $\sigma_i:TC_0\to F_1$ be the beginning of  homotopies  expressing the fact that the morphisms $x_i\widehat\mu$ are null homotopic, that is,
$\partial_1^F\circ \sigma_i=x_i\widehat\mu_0$ for $1\le i \le r$. Now let $K$ be a minimal free resolution of $R$ over $Q$ and define maps
$u_0: TC_0\otimes_Q K_0 \to F_0$ by $u_0(a\otimes b)=\widehat\mu_0(a)b$, and
$u_1(a\otimes (b_1,\dots,b_r))=\sigma_1(a)b_1+\cdots +\sigma_c(a)b_r$ for
$a\otimes (b_1,\dots,b_r)\in TC_0\otimes_Q K_1$, $u_1(a\otimes b)=\widehat\mu_1(a)b$ for
$a\otimes b\in TC_1\otimes_Q K_0$.  Then one checks easily that the following diagram commutes.
\[
\xymatrixrowsep{2pc}
\xymatrixcolsep{4pc}
\xymatrix{
TC_0 \otimes_Q K_1 \ar@{}[d]<1.7 ex>_\bigoplus \ar@{->}[dr]^{TC_0\otimes \partial_1^K} &  \\
TC_1\otimes_Q K_0 \ar@{->}[r]^{\partial_1^{TC}\otimes K_0}\ar@{->}[d]^{u_1} & TC_0\otimes_Q K_0  \ar@{->}[d]^{u_0} \ar@{->}[r] & \Im\partial_0^{TC}\otimes_QR \ar@{->}[r] \ar@{->}[d]^{\nu\otimes R} & 0\\
F_1 \ar@{->}[r]^{\partial_1^F}& F_0  \ar@{->}[r] & \Im\partial_0^C \ar@{->}[r] & 0
}
\]
This gives rise to a morphism of $Q$-complexes $(TC_{\ge 0}\otimes_QK) \to F$ such that $u_n(a\otimes b)=\widehat\mu_n(a)b$ for $a\otimes b\in TC_n\otimes_Q K_0$, the former complex being a $Q$-free resolution of $\Im\partial_0^{TC}\otimes_QR$,
It follows that we may achieve the morphism
$T\epsilon_C: TSTC \to TC$ satisfying $(T\epsilon_C)_n(a\otimes b) = ab$ for
$a\otimes b\in TC_n \otimes_Q K_0$.

We have the natural embedding $\eta_{TC}:TC \to TSTC$ with $\eta_{TC}(a)=a\otimes 1\in TC_n\otimes_Q K_0$ for all $a\in TC_n$.  Thus we have shown that $T\epsilon_C\circ\eta_{TC} \sim \Id_{TC}$.

The morphism $S\eta_D:SD\to STSD$ embeds $SD_n$ into the first component of
$STSD\simeq \bigoplus_{i=0}^cSD_{n-i}\otimes_QSK_i$ for all $n$.  And the morphism
$\epsilon_{SD}:STSD \to SD$ takes the first component of $STSD\simeq \bigoplus_{i=0}^cSD_{n-i}\otimes_QSK_i$ to $SD_n$ for all $n\in\mathbb Z$.  Thus we have
$\epsilon_{SD}\circ S\eta_D \sim \Id_{SD}$.
\end{proof}

\section{Approximations of totally acyclic complexes}

Our main application of Theorem \ref{adjunction} is a resulting notion of approximation in the homotopy category of totally acyclic complexes.  We now recall the notion of approximation we use, due to Auslander and Smal\o  \ \cite{AuslanderSmalo}, and independently,  Enochs \cite{Enochs}, 

Let $\mathcal X$ be a full subcategory of a category $\mathcal C$. Then a \emph{right $\mathcal X$-approximation} of
$C\in\mathcal C$ is a morphism $X\xrightarrow{\epsilon} C$, with $X\in\mathcal X$, such that for all objects $Y\in\mathcal X$,
the sequence $\Hom_{\mathcal C}(Y,X) \xrightarrow{\Hom(Y,\epsilon)} \Hom_{\mathcal C}(Y,C) \to 0$ is exact.  

Dually, one has the concept of left $\mathcal X$-approximations. Specifically, a morphism $C\xrightarrow{\mu} X$, with $X\in\mathcal X$, is called a \emph{left $\mathcal X$-approximation} of $C\in\mathcal C$ if for all objects $Y\in\mathcal X$,
the sequence $\Hom_{\mathcal C}(X,Y) \xrightarrow{\Hom(\mu,Y)} \Hom_{\mathcal C}(C,Y) \to 0$ is exact. 

The full subcategory $\mathcal X$ is called \emph{functorially finite} in $\mathcal C$ if for every object $C\in\mathcal C$,
there exists a right $\mathcal X$-approximation of $C$ and a left $\mathcal X$-approximation 
of $C$.

We let $S\Ktac(Q)=\{ D \otimes_QR \mid D\in\Ktac(Q)\}$.  Our main application of Theorem \ref{adjunction} is the following.

\begin{theorem}\label{approximate}
\qquad $S\Ktac(Q)$ is functorially finite in $\Ktac(R)$.
\end{theorem}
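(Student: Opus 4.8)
The plan is, for every $C\in\Ktac(R)$, to produce both a right and a left $S\Ktac(Q)$-approximation; functorial finiteness is then immediate from the definitions recalled above.

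For right approximations I would take the counit $\epsilon_C\colon STC\to C$ of the adjunction of Theorem \ref{adjunction}. Its source lies in $S\Ktac(Q)$ by definition, so the only point to check is that $\Hom_{\Ktac(R)}(SD,STC)\to\Hom_{\Ktac(R)}(SD,C)$, induced by composing with $\epsilon_C$, is surjective for every $D\in\Ktac(Q)$. By naturality of the adjunction bijection in the $\Ktac(R)$-variable, this map is identified with the map $\Hom_{\Ktac(Q)}(D,TSTC)\to\Hom_{\Ktac(Q)}(D,TC)$ induced by composing with $T\epsilon_C$. The proof of Theorem \ref{adjunction} already records the triangle identity $T\epsilon_C\circ\eta_{TC}\sim\Id_{TC}$, so $T\epsilon_C$ is a split epimorphism and hence the displayed map is onto. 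Thus $\epsilon_C$ is a right $S\Ktac(Q)$-approximation of $C$.

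For left approximations I would pass to $R$-duals. The functor $(-)^{*}=\Hom_R(-,R)$ is a contravariant triangle self-equivalence of $\Ktac(R)$ (each term of a totally acyclic complex is reflexive, and the total acyclicity condition is self-dual), and likewise for $\Hom_Q(-,Q)$ on $\Ktac(Q)$; moreover \ref{dual isomorphism} supplies natural isomorphisms $(SD)^{*}\cong S(D^{*})$, so $S$ intertwines these two dualities. Transporting the adjunction $S\dashv T$ through the duality equivalences on $\Ktac(R)$ and $\Ktac(Q)$ therefore produces a \emph{left} adjoint $T'$ of $S$, explicitly $T'C=(TC^{*})^{*}$, with unit $\eta'$ and counit $\epsilon'$ obeying the triangle identities. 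Running the argument of the previous paragraph verbatim with this adjunction, the unit $\eta'_C\colon C\to ST'C$ is a left $S\Ktac(Q)$-approximation: its target lies in $S\Ktac(Q)$, and for each $D\in\Ktac(Q)$ the map $\Hom_{\Ktac(R)}(ST'C,SD)\to\Hom_{\Ktac(R)}(C,SD)$ corresponds under $T'\dashv S$ to composition with the split monomorphism $T'\eta'_C$, hence is onto. Having produced both approximations, we conclude that $S\Ktac(Q)$ is functorially finite in $\Ktac(R)$.

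The formal skeleton is short, and the work lies in the bookkeeping. I expect the main obstacle to be checking carefully that $S$ commutes with the two dualities \emph{as triangle functors} --- that the isomorphisms of \ref{dual isomorphism} are natural in the complex and compatible with mapping cones, which the excerpt leaves unverified --- and that transporting the adjunction along these equivalences genuinely carries unit and counit to unit and counit and preserves the triangle identities. If one prefers to avoid the auxiliary functor $T'$, a cleaner route is to prove directly that $T$ is self-dual, $(-)^{*}\circ T\circ(-)^{*}\cong T$, which identifies $T'$ with $T$ and upgrades Theorem \ref{adjunction} to a two-sided adjunction ($S\dashv T$ and $T\dashv S$); but establishing this self-duality requires a nonformal comparison of $\Hom_Q(TC,Q)$ with the complete resolution attached to $C^{*}$.
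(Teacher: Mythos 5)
Your proof is correct and follows essentially the same route as the paper's: the counit $\epsilon_C\colon STC\to C$ furnishes the right $S\Ktac(Q)$-approximation via the adjunction of Theorem \ref{adjunction}, and the left approximation is obtained by dualizing, using \ref{dual isomorphism} to see that $(-)^*$ carries $S\Ktac(Q)$ into itself. The only cosmetic differences are that the paper verifies the right-approximation property by directly exhibiting the preimage $ST[f]\circ S[\eta_D]$ (using the triangle identity $\epsilon_{SD}\circ S\eta_D\sim\Id_{SD}$) rather than transporting across the adjunction bijection and quoting that $T\epsilon_C$ is a split epimorphism, and that the paper dualizes the right approximation $\epsilon_{C^*}$ directly rather than packaging the duality transport as an auxiliary adjunction $T'\dashv S$ --- both of which yield the same map $C\cong C^{**}\to(STC^*)^*$.
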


\begin{proof}
That every $C\in\Ktac(R)$ has a right $S\Ktac(Q)$-approximation follows immediately from Theorem \ref{adjunction}: the morphism $[\epsilon_C]:STC\to C$ is a right approximation in 
$\Ktac(R)$.  Indeed, if $[f]:SD\to C$ is any morphism in $\Ktac(R)$ with $D\in\Ktac(Q)$, then
from the natural transformation  $\epsilon:ST \to \Id_{\Ktac(R)}$ we have equality
$[\epsilon_C]\circ ST[f]=[f]\circ[\epsilon_{SD}]$.  Composing on the right with 
$S[\eta_D]$ we obtain $[\epsilon_C]\circ ST[f]\circ S[\eta_D]=[f]$, and thus
$ST[f]\circ S[\eta_D]:SD\to STC$ is the morphism we seek.

Now we show that every $C\in\Ktac(R)$ has a left approximation.  This can be done by simply dualizing a right approximation.  For this we will use several times that for any given $D\in\Ktac(Q)$, one has the natural isomorphism of complexes $\Hom_R(D\otimes_QR,R)\cong \Hom_Q(D,Q)\otimes_QR$ from \ref{dual isomorphism}. We have the right approximation $[\epsilon_{C^*}]: STC^* \to C^*$ of $C^*$.  The claim is that
$[\epsilon_{C^*}^*]:C\cong C^{**}\to (STC^*)^*$ is a left approximation of $C$.  Note that the target of $\epsilon_{C^*}^*$ is in $S\Ktac(Q)$ by the aforementioned isomorphism of \ref{dual isomorphism}.  Now let $E\in \Ktac(Q)$ and $f:C\to SE$ be a morphism in $\Ktac(R)$.   Then we have the morphism
$f^*:(SE)^*\to C^*$, with $(SE)^*$ in $S\Ktac(Q)$.  Therefore we have that $f^*\sim\epsilon_{C^*} g$ for some morphism $g:(SE)^*\to STC^*$.  Dualizing back we have that $f\sim g^*\epsilon_{C^*}^*$, which is what we needed to show.
\end{proof}

Motivated by results along the lines of \cite[Proposition 1.4]{Neeman}, we ask the following:

\begin{question}
Is $S\Ktac(Q)$ a thick subcategory of $\Ktac(R)$?
\end{question}
 
We illustrate Theorem \ref{approximate} with an example.

\begin{example} Let $R=k[x,y]/(x^2,y^2)$, and $C$ be the totally acyclic $R$-complex with 
$\Im\partial^C_0=Rxy\cong k$:
\[
C: \cdots \to R^3 \xrightarrow{\left(\begin{smallmatrix} x & 0 & -y \\ 0 & y & x \end{smallmatrix}\right)} R^2 \xrightarrow{\left(\begin{smallmatrix} x & y \end{smallmatrix}\right)} R \xrightarrow{\left(\begin{smallmatrix} xy \end{smallmatrix}\right)} R \xrightarrow{\left(\begin{smallmatrix} x \\ y \end{smallmatrix}\right)}R^2 \to\cdots
\]
Then a free resolution of $\Im\partial_0^C$ over $Q=k[x,y]/(x^2)$ is given by
\[
F: \cdots \to Q^2 \xrightarrow{\left(\begin{smallmatrix} x & -y \\ 0 & x \end{smallmatrix}\right)} Q^2 \xrightarrow{\left(\begin{smallmatrix} x & y \\ 0 & x \end{smallmatrix}\right)}Q^2 \xrightarrow{\left(\begin{smallmatrix} x & -y \\ 0 & x \end{smallmatrix}\right)}Q^2 \xrightarrow{\left(\begin{smallmatrix} x & y \end{smallmatrix}\right)} Q \to 0
\]
The right approximation $\epsilon_C:STC \to C$ takes the form
\[
\xymatrixrowsep{3pc}
\xymatrixcolsep{3pc}
\xymatrix{
\cdots \ar@{->}[r] & R^2\ar@{->}[r]^{\left(\begin{smallmatrix} x & -y \\ 0 & x \end{smallmatrix}\right)}\ar@{->}[d]^{\left(\begin{smallmatrix} 1 & 0 \\ 0 & 0 \\ 0 & 1 \end{smallmatrix}\right)} & R^2 \ar@{->}[d]^{\Id_{R^2}} \ar@{->}[r]^{\left(\begin{smallmatrix} x & y\\ 0 & x \end{smallmatrix}\right)} & R^2
\ar@{->}[d]^{\left(\begin{smallmatrix} 1 & 0 \end{smallmatrix}\right)} \ar@{->}[r]^{\left(\begin{smallmatrix} x & -y \\ 0 & x \end{smallmatrix}\right)} & R^2 \ar@{->}[d]^{\left(\begin{smallmatrix} y & 0 \end{smallmatrix}\right)} \ar@{->}[r]^{\left(\begin{smallmatrix} x & y\\ 0 & x \end{smallmatrix}\right)} & R^2 \ar@{->}[r]\ar@{->}[d]^{\left(\begin{smallmatrix} y & 0 \\ 0 & 0 \end{smallmatrix}\right)} & \cdots\\
\cdots \ar@{->}[r] & R^3\ar@{->}[r]_{\left(\begin{smallmatrix} x & 0 & -y \\ 0 & y & x \end{smallmatrix}\right)} & R^2 \ar@{->}[r]_{\left(\begin{smallmatrix} x & y \end{smallmatrix}\right)} & R \ar@{->}[r]_{\left(\begin{smallmatrix} xy \end{smallmatrix}\right)} & R \ar@{->}[r]_{\left(\begin{smallmatrix} x \\ y \end{smallmatrix}\right)} & R^2 \ar@{->}[r] & \cdots
}
\]

Since $C$ is self-dual in this example, that is $C\cong \Sigma^{-1} (C^*)$, the left approximation
$[\epsilon^*_C]:C\to (STC)^*$ takes the form
\[
\xymatrixrowsep{3pc}
\xymatrixcolsep{3pc}
\xymatrix{
\cdots \ar@{->}[r] & R^3\ar@{->}[r]^{\left(\begin{smallmatrix} x & 0 & -y \\ 0 & y & x \end{smallmatrix}\right)} & R^2 \ar@{->}[r]^{\left(\begin{smallmatrix} x & y \end{smallmatrix}\right)} & R \ar@{->}[r]^{\left(\begin{smallmatrix} xy \end{smallmatrix}\right)} & R \ar@{->}[r]^{\left(\begin{smallmatrix} x \\ y \end{smallmatrix}\right)} & R^2 \ar@{->}[r] & \cdots\\
\cdots \ar@{->}[r] & R^2\ar@{->}[r]_{\left(\begin{smallmatrix} x & 0\\ -y & x \end{smallmatrix}\right)}\ar@{<-}[u]_{\left(\begin{smallmatrix} y & 0 & 0\\ 0 & 0 & 0 \end{smallmatrix}\right)} & R^2 \ar@{<-}[u]_{\left(\begin{smallmatrix} y & 0 \\ 0 & 0 \end{smallmatrix}\right)} \ar@{->}[r]_{\left(\begin{smallmatrix} x & 0\\ y & x \end{smallmatrix}\right)} & R^2
\ar@{<-}[u]_{\left(\begin{smallmatrix} y \\ 0 \end{smallmatrix}\right)} \ar@{->}[r]_{\left(\begin{smallmatrix} x & 0 \\ -y & x \end{smallmatrix}\right)} & R^2 \ar@{<-}[u]_{\left(\begin{smallmatrix} 1 \\ 0 \end{smallmatrix}\right)} \ar@{->}[r]_{\left(\begin{smallmatrix} x & 0\\ y & x \end{smallmatrix}\right)} & R^2 \ar@{->}[r]\ar@{<-}[u]_{\Id_{R^2}} & \cdots
}
\]
\end{example}

Approximations may be trivial, in particular, when the projective dimension of 
$\Im\partial_0^C$ is finite over $Q$, as is the case in the next example.

\begin{example} Let $R=k[x,y]/(x^2,y^2)$ and $C$ the totally acyclic $R$-complex with $\Im\partial^C_0=Ry$:
\[
C: \cdots \to R \xrightarrow{\left(\begin{smallmatrix} y  \end{smallmatrix}\right)} R \xrightarrow{\left(\begin{smallmatrix}  y \end{smallmatrix}\right)} R \xrightarrow{\left(\begin{smallmatrix} y \end{smallmatrix}\right)} R \xrightarrow{\left(\begin{smallmatrix}  y \end{smallmatrix}\right)}R \to\cdots
\]
Then for $Q=k[x,y]/(x^2)$, $\pd_Q\Im\partial_0^C<\infty$ and the approximation is 
$[\epsilon_C]:0\to C$.
\end{example}

Recall (from \cite{AuslanderSmalo}, for example) that a morphism
$X\xrightarrow{\epsilon} C$ is called \emph{right minimal} if for every morphism $X\xrightarrow{f}X$ such that $\epsilon f = \epsilon$, we have that $f$ is an isomorphism.  We now point out that the right approximation $[\epsilon_C]:STC\to C$ may or may not be right minimal.

\begin{proposition}
Suppose that $D\in\Ktac(Q)$.  Then $[\epsilon_{SD}]:STSD\to SD$ is not a minimal approximation.
\end{proposition}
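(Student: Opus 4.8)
The plan is to exploit that, by the unit--counit identity $\epsilon_{SD}\circ S\eta_D\sim\Id_{SD}$ established inside the proof of Theorem~\ref{adjunction}, the morphism $[\epsilon_{SD}]\colon STSD\to SD$ is a split epimorphism in $\Ktac(R)$ with section $[S\eta_D]$, and then to show that it is nonetheless not an isomorphism. Indeed, suppose $[\epsilon_{SD}]$ were right minimal and set $f:=[S\eta_D]\circ[\epsilon_{SD}]\colon STSD\to STSD$. Then $[\epsilon_{SD}]\circ f=[\epsilon_{SD}]\circ[S\eta_D]\circ[\epsilon_{SD}]=[\epsilon_{SD}]$, so $f$ is an isomorphism; hence $f^{-1}\circ[S\eta_D]$ is a left inverse and $[S\eta_D]$ a right inverse of $[\epsilon_{SD}]$, which forces $[\epsilon_{SD}]$ to be an isomorphism. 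So it suffices to prove that $\cone([\epsilon_{SD}])\not\simeq 0$ in $\Ktac(R)$. (Throughout we assume $R\neq Q$ and $SD\not\simeq 0$; otherwise the statement is degenerate, as $[\epsilon_{SD}]$ is then the identity of the zero object.)

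Next I would identify the cone explicitly. Recall from the discussion preceding the proof of Theorem~\ref{adjunction} that $TSD\simeq D\otimes_Q K$, where $K$ is the minimal $Q$-free resolution of $R$; hence $STSD=(TSD)\otimes_Q R\simeq SD\otimes_R W$, with $W:=K\otimes_Q R$ a bounded complex of finitely generated free $R$-modules, and under this identification $[\epsilon_{SD}]$ becomes $\Id_{SD}\otimes p$, where $p\colon W\to R$ is the augmentation. (Note $\partial_1^W=0$, since $\partial_1^K$ has image $\ker\varphi$ which maps to $0$ in $R$; so $p$ is a genuine chain map inducing an isomorphism on $\H_0$.) Since $SD\otimes_R-$ commutes with mapping cones, $\cone([\epsilon_{SD}])\simeq SD\otimes_R C$, where $C:=\cone(p)$. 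Moreover $C\not\simeq 0$: if it were contractible, $p$ would be a quasi-isomorphism and $\Tor_i^Q(R,R)$ would vanish for all $i\ge 1$, whereas $\Tor_1^Q(R,R)\cong\ker\varphi/(\ker\varphi)^2$, which is nonzero by Nakayama because $\ker\varphi\neq 0$.

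Finally I would show that $SD\otimes_R C$ cannot be contractible. If it were, then so would be its reduction $(SD\otimes_R C)\otimes_R k$ modulo the maximal ideal $\m$ of $R$ (with $k=R/\m$), and this complex of $k$-vector spaces would therefore be exact. But write $SD\cong\widetilde{SD}\oplus(\text{contractible})$ and $C\cong\widetilde C\oplus(\text{contractible})$ with $\widetilde{SD}$ and $\widetilde C$ minimal; these are nonzero since $SD\not\simeq 0$ and $C\not\simeq 0$. Then $(SD\otimes_R C)\otimes_R k\cong(\widetilde{SD}\otimes_R k)\otimes_k(\widetilde C\otimes_R k)\oplus(\text{contractible})$, and the first summand has zero differential and is nonzero (both tensor factors are nonzero complexes with zero differential, by minimality), so the whole complex has nonzero homology. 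This contradiction gives $\cone([\epsilon_{SD}])\not\simeq 0$, and hence $[\epsilon_{SD}]$ is not right minimal.

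The formal reduction of the first paragraph and the identification of the cone in the second are routine once Theorem~\ref{adjunction} and the isomorphism $TSD\simeq D\otimes_Q K$ are in hand. I expect the real content to be the last step: ruling out that tensoring the nonzero totally acyclic complex $SD$ with the noncontractible finite free complex $C$ produces a contractible complex. The clean way to see this is the reduction modulo $\m$ combined with the existence and nonvanishing of minimal models over a local ring; one could instead phrase it in terms of nonvanishing in the singularity category $\Dbsg(R)$, but the mod-$\m$ argument is the most elementary.
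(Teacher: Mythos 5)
Your proof is correct (modulo the implicit standing hypotheses $R\neq Q$ and $D\not\simeq 0$, without which the statement fails; the paper is also silent on this, and your parenthetical is slightly off, since for $R=Q$ with $D\not\simeq 0$ the counit is $\Id_{SD}$ rather than the identity of the zero object). Your route is genuinely different from the paper's, and more complete on the key point. The paper exhibits an explicit idempotent $[f]$ on $STSD\simeq SD\otimes_R W$, namely projection onto the degree-zero summand $SD\otimes_R W_0$, checks $[\epsilon_{SD}]\circ[f]=[\epsilon_{SD}]$, and then simply \emph{asserts} that $[f]$ is not an isomorphism in $\Ktac(R)$. You instead invoke the counit identity $\epsilon_{SD}\circ S\eta_D\sim\Id_{SD}$ to make $[\epsilon_{SD}]$ a split epimorphism, deduce from the definition that right minimality would force it to be an isomorphism, and then show $\cone([\epsilon_{SD}])\simeq SD\otimes_R\cone(p)$ is noncontractible via minimal models and reduction modulo $\m$. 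That last step is precisely what the paper leaves unjustified: the paper's $[f]$ is idempotent, so $[f]$ is an isomorphism iff $[f]=\Id$ iff the complementary summand $SD\otimes_R W_{\geq 1}$ is contractible, and ruling that out needs exactly your mod-$\m$ argument (together with $\Tor_1^Q(R,R)\neq 0$). Your proof is therefore longer but fills a real gap; its only extra input is the existence of minimal models for totally acyclic complexes over a local ring, which is standard but unstated in the paper.
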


\begin{proof} Let $K$ be a $Q$-free resolution of $R$.  As described in the proof of \ref{adjunction}, $\epsilon_{SD}:STSD \to SD$ takes the first component of $STSD\simeq \bigoplus_{i=0}^cSD_{n-i}\otimes_QSK_i$ to $SD_n$ for all $n\in\mathbb Z$.  Thus taking as $[f]:STSD \to STSD$ the morphism sending $SD_n\otimes_QSK_0$ to itself and everything else to zero, we have 
$[\epsilon_{SD}]\circ[f]=[\epsilon_{SD}]$ and $[f]$ is not an isomorphism in $\Ktac(R)$.
\end{proof}

\begin{example} Let $R=k[x,y]/(x^2,y^2)$ and $C$ the totally acyclic complex
\[
C: \cdots \to R \xrightarrow{x}  R \xrightarrow{x} R \xrightarrow{x} R \to \cdots
\]
Then $M=\Im\partial_0^C=Rx$ and a free resolution of $M$ over $Q=k[x,y]/(x^2)$ is given by
\[
\cdots \to Q^2 \xrightarrow{\left(\begin{smallmatrix} x & -y^2 \\ 0 & x \end{smallmatrix}\right)} Q^2 \xrightarrow{\left(\begin{smallmatrix} x & y^2 \\ 0 & x \end{smallmatrix}\right)}Q^2 \xrightarrow{\left(\begin{smallmatrix} x & -y^2 \\ 0 & x \end{smallmatrix}\right)}Q^2 \xrightarrow{\left(\begin{smallmatrix} x & y^2 \end{smallmatrix}\right)} Q \to 0
\]
Thus $STC$ takes the form
\[
\cdots \to R^2 \xrightarrow{\left(\begin{smallmatrix} x & 0 \\ 0 & x \end{smallmatrix}\right)} R^2 \xrightarrow{\left(\begin{smallmatrix} x & 0 \\ 0 & x \end{smallmatrix}\right)}R^2 \xrightarrow{\left(\begin{smallmatrix} x & 0 \\ 0 & x \end{smallmatrix}\right)}R^2 \to\cdots
\]
and $\epsilon_C:STC \to C$ is given by $(\epsilon_C)_n=\left(\begin{smallmatrix} 1 & 0 \end{smallmatrix}\right)$ for all $n$.
This is not a minimal right approximation.  Indeed, consider the morphism $f: STC \to STC$ given by
$f_n=\left(\begin{smallmatrix} 1 & 0 \\ 0 & 0 \end{smallmatrix}\right)$. Then one has $\epsilon_Cf=\epsilon_C$ and $f$ is not a homotopy
equivalence.
\end{example}

\begin{chunk} \label{period2} {\bf Approximations by period 2 complexes.}
Recall that a local ring $Q$ is a \emph{hypersurface ring} if $Q$ is the quotient of a regular local ring by a principal ideal; hypersurface rings are Gorenstein.  In this case, Eisenbud \cite{Eisenbud} has shown that totally acyclic complexes are always periodic of period at most two. Thus in the setup where $Q$ is a hypersurface (and, as always, that $\pd_QR<\infty$ via $\varphi:Q\to R$), our
approximations compare nonperiodic totally acyclic complexes with those of period two.  This
setup occurs when $R$ has an embedded deformation \cite{Avramov}.
\end{chunk}

We next state a few results for later reference.  They have to do with compositions of approximations, in two different senses.

\begin{proposition}
Consider a sequence of finite local ring homomorphisms 
\[
Q \xra{\varphi} R' \xra{\psi} R
\] 
such that $Q$ and $R'$ are Gorenstein,  $\pd_QR'<\infty$, and $\pd_{R'}R<\infty$.  Then
$S_{\psi\varphi}$ and $T_{\psi\varphi}$ are naturally isomorphic to $S_{\psi}S_{\varphi}$ and $T_{\varphi}T_{\psi}$, respectively.
\end{proposition}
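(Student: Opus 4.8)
The plan is to deduce both isomorphisms from formal properties of adjunctions, applying Theorem~\ref{adjunction} separately to $\varphi$, to $\psi$, and to the composite $\psi\varphi$. The first step is to check that $\psi\varphi:Q\to R$ satisfies the standing hypotheses of Section~\ref{forgetful}: it is a surjective local ring homomorphism, $Q$ is Gorenstein, and $\pd_Q R<\infty$. Only the last point needs an argument, namely the transitivity of finite projective dimension: taking a finite resolution of $R$ by finitely generated free $R'$-modules and resolving each term over $Q$ gives $\pd_Q R\le \pd_Q R'+\pd_{R'}R<\infty$. Hence $S_{\psi\varphi}$ and $T_{\psi\varphi}$ are defined, and Theorem~\ref{adjunction} supplies three adjoint pairs of triangle functors: $(S_\varphi,T_\varphi)$, $(S_\psi,T_\psi)$, and $(S_{\psi\varphi},T_{\psi\varphi})$.

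Next I would treat $S$, which is elementary. For $C\in\Ktac(Q)$ the canonical associativity isomorphism of complexes $(C\otimes_Q R')\otimes_{R'}R\cong C\otimes_Q R$ identifies $S_\psi S_\varphi C$ with $S_{\psi\varphi}C$; this identification is natural in $C$, commutes with the shift, and is compatible with the cone construction, so it is an isomorphism of triangle functors $S_{\psi\varphi}\cong S_\psi S_\varphi$. (In particular $S_\psi S_\varphi$ does take values in $\Ktac(R)$, as it must.)

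Finally, for $T$ I would invoke uniqueness of adjoints. The composite of left adjoints $S_\psi S_\varphi:\Ktac(Q)\to\Ktac(R)$ has as a right adjoint the composite of the corresponding right adjoints in the reverse order, $T_\varphi T_\psi:\Ktac(R)\to\Ktac(Q)$: for $D\in\Ktac(Q)$ and $C\in\Ktac(R)$ there are natural bijections
\[
\Hom_{\Ktac(R)}(S_\psi S_\varphi D,C)\cong \Hom_{\Ktac(R')}(S_\varphi D,T_\psi C)\cong \Hom_{\Ktac(Q)}(D,T_\varphi T_\psi C),
\]
the first coming from the pair $(S_\psi,T_\psi)$ and the second from $(S_\varphi,T_\varphi)$. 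Combining this with the isomorphism $S_{\psi\varphi}\cong S_\psi S_\varphi$ from the previous step shows that $T_\varphi T_\psi$ is a right adjoint of $S_{\psi\varphi}$. Since a triangle functor admits at most one right adjoint up to natural isomorphism, and such an isomorphism is automatically compatible with the triangulated structure, we conclude $T_{\psi\varphi}\cong T_\varphi T_\psi$.

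I expect the main obstacle to be bookkeeping rather than substance: verifying carefully that $\psi\varphi$ meets every hypothesis needed to apply Theorem~\ref{adjunction}, and being precise that the natural isomorphisms produced are isomorphisms of \emph{triangle} functors and not merely of the underlying additive functors. One could instead argue the $T$-statement by hand --- showing directly from \ref{well-defined} that a complete resolution over $Q$ of $\Im\partial_0^C$ agrees up to homotopy equivalence with one of $\Im\partial_0^{T_\psi C}$, using that these two $R'$-modules have the same sufficiently high syzygies over $R'$ and that a complete resolution over $Q$ is unchanged, up to shift, when a module is replaced by a syzygy over the intermediate ring $R'$ (which has finite projective dimension over $Q$) --- but this is longer, and the adjunction argument makes it unnecessary.
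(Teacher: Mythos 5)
Your proof is correct and follows the same strategy as the paper's one-line argument: the $S$-isomorphism comes directly from associativity of tensor product, and the $T$-isomorphism then follows from uniqueness of (right) adjoints applied to the composite adjunction. You simply spell out what the paper leaves implicit, including the needed verification that $\pd_Q R<\infty$ so that $\psi\varphi$ satisfies the standing hypotheses.
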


\begin{proof}
This follows from the fact that the assertion is clear for the $S$ functors, and from
uniqueness of adjoints. 
\end{proof}





\begin{chunk} \label{resolution}
Upon computing the right approximation $[\epsilon_C] : STC \to C$, one may iterate this process.  Indeed, complete
$[\epsilon_C]$ to a triangle in $\Ktac(R)$ and rotate it to obtain
$$\Sigma^{-1} \cone([\epsilon_C]) \to STC \to C \to.$$
Now compute a right approximation of $\Sigma^{-1} STC$, and repeat.  One then obtains a sequence of maps
in $\Ktac(R)$:
$$\mathbf{B} : \cdots \to B_3 \to B_2 \to B_1 \to B_0 \to C$$
where $B_0$ is a right approximation of $C$, $B_1$ is a right approximation of $\Sigma^{-1} \cone(B_0 \to C)$, etc.
Note that since composing two consecutive maps in an exact triangle is the zero map, one has
that the same holds for the maps in $\mathbf{B}$.
\end{chunk}

\begin{proposition}
Let $\varphi : Q \to R$ be a surjective local homomorphism of Gorenstein rings with $\pd_Q R = 1$, and let $C \in \Ktac(R)$.
If $[\epsilon_C] : STC \to C$ is the right approximation of $C$, then $\cone([\epsilon_C])$ is isomorphic to $\Sigma^2 C$.
\end{proposition}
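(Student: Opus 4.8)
The plan is to reduce to the codimension‑one situation and then exploit that multiplication by the defining parameter is null‑homotopic.

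Since $\pd_Q R = 1$ and $Q$ is local, $\ker\varphi$ is a free ideal, hence principal and generated by a nonzerodivisor: $R = Q/(f)$, and the minimal $Q$‑free resolution of $R$ is $K\colon 0 \to Q \xrightarrow{f} Q \to 0$. Put $M = \Im\partial_0^C$, so $fM = 0$. The central observation is that $f\cdot\Id_{TC}$ is a morphism of complete resolutions of $M$ over $Q$ lifting $f\cdot\Id_M = 0$, hence, by the uniqueness part of \ref{well-defined}, is homotopic to $0$ in $\Ktac(Q)$; fix a homotopy $t$ on $TC$ with $\partial^{TC} t + t\,\partial^{TC} = f\cdot\Id_{TC}$. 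This is the single ``higher homotopy'' that is available precisely because $\ker\varphi$ is generated by one element.

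I would then use $t$ to describe $STC = TC\otimes_Q R$ and the counit concretely. Because $f$ acts as $0$ on $R$, the reduction $\bar t = t\otimes_Q R$ anticommutes with the differential of $STC$, so it is a degree‑raising chain self‑map of $STC$, i.e.\ it represents a class in $\Tateext^2_R(M,M)$ --- the Eisenbud operator attached to $f$. Together with the comparison map defining $TC$ and the map $F \to C_{\ge 0}$ defining $\epsilon_C$, this is exactly the codimension‑one Shamash/Eisenbud picture of how a $Q$‑(complete) resolution of $M$ base‑changes to $R$: up to homotopy, $STC$ is the total complex of the two‑periodic tower assembled from $C$ and $\bar t$, and $\epsilon_C$ is its canonical augmentation onto $C$. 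Unwinding this, the defining triangle of $\cone(\epsilon_C)$ becomes the rotation of a triangle $\Sigma C \to STC \xrightarrow{\epsilon_C} C \xrightarrow{\chi} \Sigma^2 C$ (with $\chi$ the Eisenbud operator), and reading off the third vertex gives $\cone(\epsilon_C) \cong \Sigma^2 C$.

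Alternatively --- and this is the route I would actually write up, since it sidesteps the explicit homotopies --- one can pass through the commuting square of the introduction: under the equivalence $\Ktac(Q)\simeq\Dbsg(Q)$ and the fully faithful embedding $\Ktac(R)\hookrightarrow\Dbsg(R)$, the pair $(S,T)$ becomes (derived base change, restriction of scalars), and $\epsilon_C$ becomes the counit $M\otimes_Q^{\mathbf L} R \to M$ of this adjunction at $M$. Since $fM = 0$ one has $M\otimes_Q^{\mathbf L} R \cong M\otimes_Q(Q\xrightarrow{0}Q) \cong M\oplus\Sigma M$ in $\Dbsg(R)$, the counit is the split projection onto the first summand, and its cone is $\Sigma^2 M$; reflecting this isomorphism back through the embedding yields $\cone(\epsilon_C)\cong\Sigma^2 C$. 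I expect the main difficulty, in either route, to be bookkeeping rather than conceptual: in the first route, constructing the contracting homotopy that kills the (large, non‑minimal) contractible part of $\cone(\epsilon_C)$ and identifying the remaining summand on the nose with $\Sigma^2 C$; in the second, keeping track of the shift conventions in the two equivalences carefully enough to be certain that $\epsilon_C$ really is the derived‑base‑change counit.
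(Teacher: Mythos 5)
Your first route is, in outline, the same as the paper's: the paper also writes $R=Q/(x)$, takes the cohomology operator $t : C \to \Sigma^2 C$ of the embedded deformation, and invokes (citing Avramov) an exact triangle $C \xrightarrow{t} \Sigma^2 C \to \Sigma STC \to$, from which the claim follows by rotation. But the way you try to flesh this out is off in a few places. A homotopy $t$ on $TC$ with $\partial t + t\partial = f\cdot\Id$ has degree $+1$, and so does its reduction $\bar t$ on $STC$; the Eisenbud operator, by contrast, is a degree $-2$ chain map, so $\bar t$ is not it (your $t$ is rather the first higher homotopy $\sigma_1$ in a Shamash system, from which the degree $-2$ operator is separately extracted). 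Moreover, $STC$ is not a complete $R$-resolution of $M=\Im\partial_0^C$: its ranks are the $Q$-Betti numbers of $M$, not the $R$-Betti numbers, and indeed when $Q$ is regular $STC=0$ while $M\neq 0$; so a chain self-map of $STC$ does not represent a class in $\Tateext^*_R(M,M)$. Finally, the Shamash picture is the reverse of what you wrote: the two-periodic tower built from $STC_{\ge 0}$ and the higher homotopies reconstructs the larger complex $C_{\ge 0}$, not the other way around, and $\epsilon_C: STC \to C$ is the inclusion of the bottom rung of that tower, not an augmentation onto $C$. One can patch all of this and reach the paper's triangle, but it takes essentially the work the paper delegates to the cited reference.

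Your second route, through the singularity categories, is genuinely different from the paper's. The pieces are sound: $M\otimes_Q^{\mathbf L}R\cong M\oplus\Sigma M$ since $fM=0$ and $K$ is the two-term Koszul complex, the counit at $M$ is the split projection onto the degree-zero summand, and the cone of a split epimorphism with kernel $\Sigma M$ is $\Sigma^2 M$ (the $\cone(\Id_M)$ summand is contractible). What this buys is a proof that entirely avoids cohomology operators and higher homotopies. What it costs is reliance on the commuting square of adjunctions in the introduction relating $\Ktac(-)$ to $\Dbsg(-)$ --- in particular, the identification of $[\epsilon_C]$ with the derived base-change counit at $M$ --- which the paper states without proof, so a self-contained write-up along this route would have to supply that compatibility, and as you note the shift bookkeeping in the two comparison functors is where the care is needed.
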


\begin{proof}
By the assumptions of the proposition, we have that  $R = Q/(x)$ for some nonzerodivisor $x$ contained in the maximal ideal of $Q$.  Now let $t : C \to \Sigma^2 C$ be the cohomology operator
defined by the embedded deformation $R = Q/(x)$ (see \cite{Avramov}). Then one can show that we have an exact triangle
$$C \xra{t} \Sigma^2 C \to \Sigma STC \to$$
where the last arrow is the approximation map $\Sigma[\epsilon_C]$.  Therefore, after rotation we have
$$STC \xra{[\epsilon_C]} C \xra{t} \Sigma^2 C \to$$
which proves the claim.
\end{proof}

The following proposition follows from the previous one.

\begin{proposition}
Let $\varphi : Q \to R$ be a surjective local homomorphism of Gorenstein rings with $\pd_Q R = 1$, and let $C \in \Ktac(R)$.
Then the triangle resolution, as in \ref{resolution}, of $C$ in $\Ktac(R)$ with respect to $\Ktac(Q)$ has the form
$$\cdots \to \Sigma^{-2}STC \to \Sigma^{-1}STC \to STC \to C.$$
\end{proposition}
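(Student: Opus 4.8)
The plan is to run the triangle‑resolution construction of \ref{resolution} and feed the preceding proposition into it at every stage. Write $[\epsilon_C]\colon STC\to C$ for the right $S\Ktac(Q)$‑approximation of $C$, so that $B_0=STC$ and the first map of the resolution is $[\epsilon_C]$. By the preceding proposition $\cone([\epsilon_C])\cong\Sigma^{2}C$; rotating the triangle $STC\xrightarrow{[\epsilon_C]}C\to\cone([\epsilon_C])\to\Sigma STC$ therefore gives a triangle
\[
\Sigma^{-1}\cone([\epsilon_C])\longrightarrow STC\xrightarrow{[\epsilon_C]}C\longrightarrow\cone([\epsilon_C]),
\]
in which $\Sigma^{-1}\cone([\epsilon_C])$ is canonically a shift of $C$. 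Per \ref{resolution}, the next term $B_1$ of the resolution is a right approximation of this shift of $C$, and the map $B_1\to B_0$ is the corresponding composite extracted from the rotated triangle.

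Next I would use that $S$ and $T$ are triangle functors and that $\epsilon$ is a natural transformation between triangle functors: $S$ and $T$ commute with $\Sigma$, and the component of $\epsilon$ at any shift of $C$ is identified with the corresponding shift of $[\epsilon_C]$. Hence the right approximation of the shift of $C$ appearing above is the corresponding shift of $STC$, which identifies $B_1$ with $\Sigma^{-1}STC$ and the map $B_1\to B_0$ with the shift of the composite $STC\to\Sigma^{-1}\cone([\epsilon_C])\to STC$ read off from the rotated triangle. The crucial point is that this configuration is self‑similar: replacing $C$ by the relevant shift of $C$, the preceding proposition again computes the cone of its approximation map as a double shift of it, so the same rotation produces the next term. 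An induction on $i$ then shows that the $i$‑th term of the triangle resolution of \ref{resolution} is $\Sigma^{-i}STC$ and that the connecting maps are the successive shifts of the map above; this is exactly the asserted form
\[
\cdots\to\Sigma^{-2}STC\to\Sigma^{-1}STC\to STC\to C .
\]

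I expect the substantive homological input to be entirely the preceding proposition — hence ultimately the cohomology‑operator triangle $STC\xrightarrow{[\epsilon_C]}C\xrightarrow{t}\Sigma^{2}C\to$ — and the remaining work to be bookkeeping: pinning down the exponents so that the $i$‑th object is $\Sigma^{-i}STC$, and checking in the inductive step that the morphism produced is genuinely the shift of the previous connecting map, so that what one obtains really is the triangle resolution of \ref{resolution} and not merely a complex with the same objects. A minor but real point worth isolating is the compatibility of the natural isomorphisms $S\Sigma\cong\Sigma S$ and $T\Sigma\cong\Sigma T$ with the unit and counit $\eta,\epsilon$, since this is what legitimizes identifying the right approximation of a shift of $C$ with the shift of the right approximation of $C$.
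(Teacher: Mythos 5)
Your overall strategy is exactly the one the paper intends: the paper's proof consists of the single sentence that the result follows from the preceding proposition, and your proposal simply unwinds what that means — feed $\cone([\epsilon_C])\cong\Sigma^2 C$ into the iterative construction of \ref{resolution}, using that $S$ and $T$ are triangle functors (so commute with $\Sigma$) and that $\epsilon$ is natural. So the approach matches; there is no genuinely different route here.

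The problem is in the step you wave off as bookkeeping, and it is not merely bookkeeping. From the preceding proposition you correctly record $\cone([\epsilon_C])\cong\Sigma^2 C$, hence $\Sigma^{-1}\cone([\epsilon_C])\cong\Sigma C$. The construction in \ref{resolution} then says $B_1$ is the right approximation of this object, i.e.\ $B_1 = ST(\Sigma C)$. Since $ST\Sigma\cong\Sigma ST$, this is $\Sigma STC$, \emph{not} $\Sigma^{-1}STC$ as you then assert. You write that ``the right approximation of the shift of $C$ appearing above is the corresponding shift of $STC$, which identifies $B_1$ with $\Sigma^{-1}STC$'' — but the shift appearing above is $\Sigma C$, so the ``corresponding shift of $STC$'' is $\Sigma STC$. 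Iterating this honestly gives $B_i\cong\Sigma^i STC$, i.e.\ a resolution of the form $\cdots\to\Sigma^2 STC\to\Sigma STC\to STC\to C$, with the opposite sign from what you set out to prove. You flag at the end that the exponents need to be pinned down, which is good instinct, but leaving it unpinned is precisely where the argument is incomplete: pinning it down produces $\Sigma^{i}$ rather than $\Sigma^{-i}$. Either you must explain why the exponent flips (it doesn't, given the inputs you cite), or you should note explicitly that the claim $\cone([\epsilon_C])\cong\Sigma^{2}C$ and the claim $B_i\cong\Sigma^{-i}STC$ are not compatible as stated and one of the two signs must be corrected. As it stands, the jump from ``$\Sigma^{-1}\cone([\epsilon_C])$ is a shift of $C$'' to ``$B_1=\Sigma^{-1}STC$'' is unjustified and, taken literally, false.
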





\begin{thebibliography}{BeJoOp}
\bibitem[AuSm]{AuslanderSmalo} M.\ Auslander and S.\ O. \ Smal\o,  {\it Preprojective modules over Artin algebras\/} J. Algebra {\bf 66}(1) (1980), 61Ð122. 
\bibitem[Av]{Avramov} L.~Avramov, \emph{Homological asymptotics of modules over local rings}, Commutative Algebra (Berkeley, 1987), MSRI Publ. 15, Springer, New York 1989; pp. 33--62.
\bibitem[AvMa]{AvramovMartsinkovsky} L.~Avramov and A.~Martsinkovsky, \emph{Absolute, relative, and Tate cohomology of modules of finite Gorenstein dimension}  Proc. London Math. Soc. (3) {\bf 85} (2002), 393--440.
\bibitem[BeJoOp]{BerghJorgensenOppermann} P. A.\ Bergh, D. A.\ Jorgensen and S.\ Oppermann, {\it The Gorenstein  defect category\/} Quarterly Journal of Mathematics {\bf 6} (2015), no. 2, 459--471.
\bibitem[Bu]{Buchweitz}R.-O.\ Buchweitz, \emph{Maximal Cohen-Macaulay modules and Tate-cohomology over Gorenstein rings}, 1987, 155 pp; see \\https://tspace.library.utoronto.ca/handle/1807/16682.
\bibitem[ChFoHo]{CFH} L.~W.~Christensen, H.-B.~Foxby and H.~Holm, \emph{Derived category methods in commutative algebra} in preparation.
\bibitem[Ei]{Eisenbud} D.~Eisenbud, \emph{Homological algebra on a complete intersection, with an application to group representations}, Trans. Amer. Math. Soc. {\bf 260} (1980), no. 1, 35--64.
\bibitem[En]{Enochs} E.\ E.\ Enochs, \emph{Injective and ßat covers, envelopes and resolvents}, Israel J. Math. {\bf 39} (1981), no. 3, 189--209.
\bibitem[Kr]{Krause} H.\ Krause, {\it Approximations and adjoints in homotopy categories\/}, Math. Ann. {\bf 353} (2012), no. 3, 765--781. 
\bibitem[Ne]{Neeman}  A.~Neeman, \emph{Some adjoints in homotopy categories}, Ann.Math (2) {\bf 171}(3) (2010), 2143--2155.
\end{thebibliography}
\end{document}